\date{}
\newlength{\defbaselineskip}
\newcommand{\setlinespacing}[1]%
           {\setlength{\baselineskip}{#1 \defbaselineskip}}
\newcommand{\N}{{\mathbb{N}}}
\newcommand{\actaqed}{\hfill $\actabox$}
{\medskip\noindent \textit{Proof of #1. }}%
{\actaqed \medskip}
\def\C{{\mathcal C}}
\def \Tr{\mathcal T}
\def \V{\mathcal V}
\def \cM{\mathcal M}
\def\R{{\mathbb R}}
\def\Z{\mathbb Z}
\def \T{\mathbb T}
\def\bbC{\mathbb C}
\def \<{\langle}
\def\>{\rangle}
\def \e{\varepsilon}
\def \ro{\varrho}
\def \de{\delta}
\def\la{\lambda}
\def\La{\Lambda}
\def\bx{\mathbf x}
\def\by{\mathbf y}
\def\bk{\mathbf k}
\def\bn{\mathbf n}
\def\bw{\mathbf w}
\def\bs{\mathbf s}
\def\btt{\mathbf t}
\def\bN{\mathbf N}
\def\bW{\mathbf W}
\def\bH{\mathbf H}
\def\bF{\mathbf F}
\newtheorem{Theorem}{Theorem}[section]
\newtheorem{Lemma}{Lemma}[section]
\newtheorem{Definition}{Definition}[section]
\newtheorem{Proposition}{Proposition}[section]
\numberwithin{equation}{section}
\newcommand{\be}{\begin{equation}}
\newcommand{\ee}{\end{equation}}
\begin{document}

\title{Bounds on Kolmogorov widths and sampling recovery for classes with small mixed smoothness}

\author{V. Temlyakov\thanks{University of South Carolina, Steklov Institute of Mathematics, Lomonosov Moscow State University, and Moscow Center for Fundamental and Applied Mathematics.}\, and T. Ullrich\thanks{Faculty of Mathematics, 09107 Chemnitz, Germany. }} \maketitle

\begin{abstract}
	{Results on asymptotic characteristics of classes of functions with mixed smoothness are obtained in the paper. Our main interest is in estimating the Kolmogorov widths of classes with small mixed smoothness. 
We prove the corresponding bounds for the unit balls of the trigonometric polynomials with frequencies from a 
hyperbolic cross. We demonstrate how our results on the Kolmogorov widths imply new upper bounds 
  for the optimal sampling recovery in the $L_2$ norm of functions with small mixed smoothness. 	 }
\end{abstract}

\section{Introduction}
\label{I}

Recent results on sampling discretization of integral norms of functions from finite dimensional subspaces emphasized 
importance of good upper bounds of asymptotic characteristics of the unit balls of these subspaces in the uniform norm
(see, for instance, \cite{VT159}, \cite{DPTT}, \cite{DPSTT1}, and \cite{Kos}). The entropy numbers are used in applications in sampling discretization. Very recently it was noticed (see \cite{VT183}) that good bounds on the Kolmogorov widths of a function class can be used for estimation of errors of the optimal sample recovery of functions from this class in the $L_2$ norm. Motivated by these recent discoveries we concentrate on a study of the upper bounds of  the Kolmogorov widths of classes of functions with mixed smoothness in the uniform norm. It is well known (see, for instance, the list of 
Outstanding Open Problems on page 12 of \cite{DTU}) that approximation of functions with mixed smoothness in the uniform norm is very difficult. 

Following a classical approach we begin with a study of the asymptotic characteristics of the unit balls of subspaces of the trigonometric polynomials with frequencies in a hyperbolic cross.  
In Section \ref{A} we use an elementary approach, which   applies a standard cutoff operator. A new ingredient here is that we apply the cutoff operator to the dyadic blocks of a function, not to the function itself.  Then we apply the  finite dimensional results to smoothness classes. In this paper we only consider the case of small smoothness. The corresponding results in the case of large smoothness are known. Also, it is known that a step from analysis of classes with large smoothness to analysis of classes with small smoothness is a non-trivial step, which requires a new technique. Our analysis confirms this observation. 

We now formulate some of our results in order to demonstrate the flavor of the obtained results. We formulate the corresponding results for the Kolmogorov widths: For a compact set $\bF \subset X$ of a Banach space $X$ define
$$
d_m(\bF,X) := \inf_{\{u_i\}_{i=1}^m\subset X}
\sup_{f\in \bF}
\inf_{c_i} \left \| f - \sum_{i=1}^{m}
c_i u_i \right\|_X,\quad m=1,2,\dots
$$
and
$$
d_0(\bF,X):= \sup_{f\in \bF}\|f\|_X.
$$
Let $Q_n$, $n\in \N$, be the stepped hyperbolic cross:
$$
Q_n := \cup_{\bs:\|\bs\|_1\le n} \rho(\bs),
$$
where
$$
\rho (\bs) := \{\bk \in \Z^d : [2^{s_j-1}] \le |k_j| < 2^{s_j}, \quad j=1,\dots,d\},
$$
and let the corresponding set of the hyperbolic cross polynomials be $\Tr(Q_n)$. For a finite subset 
$Q\subset \Z^d$ we denote a subspace of the trigonometric polynomials with frequencies in $Q$ by
$$
\Tr(Q) := \left\{f: f=\sum_{\bk\in Q} c_\bk e^{i(\bk,\bx)}\right\}
$$
and denote the unit ball of $\Tr(Q)$ in the $L_p$ norm by
$$
\Tr(Q)_p := \{f\in \Tr(Q_n)\,:\, \|f\|_p \le 1\}.
$$
In Section \ref{A} we prove the following upper bound for $2\le p<\infty$
\be\label{I1}
 d_m(\Tr(Q_n)_p,L_\infty) \le C(p,d) (2^n/m)^{1/p}n^{(d-1)(1-1/p)+1/p}.
\ee
In the case $p=2$ bound (\ref{I1}) is known (see \cite{TrBe}, 11.2.5, p.489, and historical comments there). We use it in the proof of (\ref{I1}). 

Bound (\ref{I1}) and Carl's inequality (see Section \ref{D}) imply the following bound for the entropy numbers (see the definition of the entropy numbers below in Section \ref{B})
\be\label{I2}
\e_k(\Tr(Q_n)_p,L_\infty) \le C(p,d) (|Q_n|/k)^{1/p}n^{(d-1)(1-2/p)+1/p}.
\ee
Bound (\ref{I2}) is known. It was obtained in \cite{VT180} to prove the Marcinkiewicz type discretization theorems for the hyperbolic cross polynomials. Note that the known proof of (\ref{I2}) is based on deep results from functional analysis
(see Section \ref{D} for a discussion). Thus, our analysis in this paper provides an alternative proof of (\ref{I2}).

We derive the following results for the mixed smoothness classes $\bW^r_p$ (see the definition below in Section \ref{Ac}) from (\ref{I1}).
Let $d\ge 2$, $2<p\le\infty$.    Then for $1/p<r<1/2$ we have
\be\label{I3}
d_m(\bW^r_p,L_\infty) \le C(p,d,r) m^{-r} (\log m)^{(d-2)(1-r)+1}
\ee
and for $r=1/2$ we have
\be\label{I4}
d_m(\bW^{1/2}_p,L_\infty) \le C(p,d) m^{-1/2} (\log m)^{d/2} (\log \log m)^{3/2}.
\ee

The main results of the paper are presented in Sections \ref{A} and \ref{Ac}. In Section \ref{B} we discuss the mostly known results in the case $d=2$. In Section \ref{C} we show how our new results from Section \ref{Ac} can be applied for estimating optimal errors of numerical integration on classes with small mixed smoothness. For example, combining known lower bounds for numerical integration and the known Novak's inequality with results from Section \ref{Ac} we obtain the following relation for $2<p<\infty$ and $1/p<r<1/2$
$$
m^{-r}(\log m)^{(d-1)/2} \ll \kappa_m(\bW^r_p)\le 2d_m(\bW^r_p,L_\infty) \ll m^{-r} (\log m)^{(d-2)(1-r)+1},
$$
where $\kappa_m(\bW^r_p)$ is the optimal error of numerical integration with $m$ knots (see Section \ref{C} for details). Also, in Section \ref{C} we demonstrate how results of Section \ref{Ac} provide new upper bounds for the sampling recovery on classes with small mixed smoothness. In Section \ref{D} we discuss a connection of our new results with the problem of sampling discretization of integral norms
of trigonometric polynomials from $\Tr(Q_n)$. 

For the reader's convenience we write $a_m\ll b_m$ instead of $a_m\le C(p,d)b_m$ or $a_m\le C(p,d,r)b_m$, where $C(p,d)$ and $C(p,d,r)$ are positive constants. In case $a_m\ll b_m$ and $b_m\ll a_m$ we write $a_m\asymp b_m$.

\section{Kolmogorov widths and hyperbolic crosses}
\label{A}

Let  $\Delta Q_n := Q_n\setminus Q_{n-1}$.  We begin with a proof of bound (\ref{I1}). The main idea of our proof is an application of the cutoff operator 
 to the dyadic blocks of a function from $\Tr(\Delta Q_n)$. The following Nikol'skii inequality for $\Tr(Q_n)$ is known (see \cite{VTbookMA}, p.161, Theorem 4.3.16): for any $f\in \Tr(Q_n)$ we have 
\be\label{N}
\|f\|_\infty \ll  2^{n/p} n^{(d-1)(1-1/p)}\|f\|_p,\quad 1\le p<\infty.
\ee

\begin{Lemma}\label{AL1} Let $2\le p<\infty$. We have for $\bar m:= \max(m,1)$
\be\label{A2}
d_m(\Tr(\Delta Q_n)_p,L_\infty) \ll (2^n/\bar m)^{1/p}n^{(d-1)(1-1/p)+1/p},\quad m=0,1,\dots,
\ee
\be\label{A2'}
d_m(\Tr(Q_n)_p,L_\infty) \ll (2^n/\bar m)^{1/p}n^{(d-1)(1-1/p)+1/p},\quad m=0,1,\dots.
\ee
\end{Lemma}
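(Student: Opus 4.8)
The plan is to establish the block bound \eqref{A2} and then deduce \eqref{A2'} from it. For the passage \eqref{A2}$\Rightarrow$\eqref{A2'}, I would write $Q_n$ as the disjoint union of the dyadic layers $\Delta Q_k$, $0\le k\le n$, split $f=\sum_{k=0}^{n}\delta^{(k)}f$ with $\delta^{(k)}f\in\Tr(\Delta Q_k)$, and use the uniform $L_p$-boundedness of the layer projections $\delta^{(k)}$ for $1<p<\infty$ (Littlewood--Paley theory) to get $\|\delta^{(k)}f\|_p\ll\|f\|_p\le1$. Approximating each layer by a subspace realizing \eqref{A2} (with $n$ replaced by $k$) and allocating the budget $m$ across the $n+1$ layers with a geometric profile concentrated at $k=n$, the geometric decay of the factors $2^{k/p}$ makes the triangle-inequality sum collapse to its top term, which is the right side of \eqref{A2'}. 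This step is routine, so the content is \eqref{A2}.

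For \eqref{A2} itself I would first clear the extreme ranges of $m$. By the Nikol'skii inequality \eqref{N} the diameter satisfies $d_0(\Tr(\Delta Q_n)_p,L_\infty)\ll 2^{n/p}n^{(d-1)(1-1/p)}$, and the target equals this diameter times $(n/\bar m)^{1/p}$; hence for $1\le m\le n$ the claim is immediate from $d_m\le d_0$. For large $m$ it follows from the already known case $p=2$ of \eqref{I1}: since $\Tr(\Delta Q_n)_p\subset\Tr(\Delta Q_n)_2\subset\Tr(Q_n)_2$, one has $d_m(\Tr(\Delta Q_n)_p,L_\infty)\ll(2^n/m)^{1/2}n^{d/2}$, and a short computation shows this drops below the target once $m\gg 2^n n^{2-d}$. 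Thus only the intermediate range of $m$ requires a genuinely new argument.

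In that range I would decompose $f=\sum_{\|\bs\|_1=n}f_\bs$ into dyadic blocks $f_\bs\in\Tr(\rho(\bs))$ and record the coefficient control $\sum_{\|\bs\|_1=n}\|f_\bs\|_p^p\ll\|f\|_p^p\le1$, valid for $p\ge2$ by Littlewood--Paley (equivalently Hausdorff--Young at the level of Fourier coefficients). The key device, following the paper's stated new ingredient, is to apply the cutoff operator to each block $f_\bs$ rather than to $f$ itself. Each block occupies a full coordinate box of dimension $\asymp 2^n$, on which the one-box Nikol'skii inequality $\|f_\bs\|_\infty\ll 2^{n/p}\|f_\bs\|_p$ carries no logarithmic loss and a fixed subspace approximates the $L_p$-ball in $L_\infty$ at the favorable rate $(2^n/\cdot)^{1/p}$. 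This is what should produce the $(2^n/\bar m)^{1/p}$ factor together with the extra $n^{1/p}$ over the diameter appearing in the target.

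The main obstacle is the assembly of these blockwise estimates, where the $\asymp n^{d-1}$ blocks must be handled globally. The crude assembly, bounding $\|f-g\|_\infty\le\sum_\bs\|f_\bs-g_\bs\|_\infty$ by the triangle inequality and then $\sum_\bs\|f_\bs\|_p\le(\#\{\bs\})^{1-1/p}$ by H\"older, only yields $(2^n/\bar m)^{1/p}n^{d-1}$, which for $d>2$ exceeds the target by the factor $n^{(d-2)/p}$; conversely one cannot simply replace the triangle inequality by an $\ell_2$-type bound, since the block residuals may align in $L_\infty$. Recovering exactly the exponent $(d-1)(1-1/p)+1/p$ therefore requires combining the per-block cutoff with the global hyperbolic-cross structure, as encapsulated in the sharp $p=2$ width of \eqref{I1}, in such a way that the block count does not re-enter as the full power $n^{d-1}$ and, crucially, without reintroducing a second factor $2^{n/p}$ through Nikol'skii applied to the whole remainder, all while keeping the approximant inside a single fixed $m$-dimensional subspace so that a Kolmogorov width rather than a best $m$-term error is being bounded. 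I expect this global combination, balancing the cutoff level against $m$, to be the delicate and most technical part of the proof.
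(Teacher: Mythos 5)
You have correctly isolated the key device (applying the cutoff to each dyadic block $\de_\bs(f)$ rather than to $f$) and the right auxiliary facts (the Littlewood--Paley control $\sum_{\|\bs\|_1=n}\|\de_\bs(f)\|_p^p\ll\|f\|_p^p$ and the known $p=2$ width of the hyperbolic cross), and your reductions for the extreme ranges of $m$ are sound. But the step you defer --- ``the delicate and most technical part'' --- is precisely the content of the lemma, and your proposed mechanism for it points in the wrong direction: the paper does \emph{not} approximate each block separately by a box-type width result (that is the $d=2$ route of Section 4, which provably loses a logarithm), and the assembly difficulty you describe does not arise at all, because the bounded parts are never approximated.

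Concretely, the paper fixes a single threshold level $T$ and splits each block at height $T\|\de_\bs(f)\|_p$, then reprojects with the de la Vall\'ee Poussin operator so both pieces stay trigonometric: $t^2_\bs$ with $\|t^2_\bs\|_\infty\ll T\|\de_\bs(f)\|_p$ and $t^1_\bs$ with $\|t^1_\bs\|_2\ll T^{1-p/2}\|\de_\bs(f)\|_p$. The bounded parts are simply summed by the triangle inequality, and H\"older over the $\asymp n^{d-1}$ blocks converts $\sum_\bs\|\de_\bs(f)\|_p$ into $n^{(d-1)(1-1/p)}\bigl(\sum_\bs\|\de_\bs(f)\|_p^p\bigr)^{1/p}\ll n^{(d-1)(1-1/p)}$; this is exactly where the exponent $(d-1)(1-1/p)$ comes from, and since this piece is only \emph{bounded} in $L_\infty$ (property P2: adding a small $L_\infty$-ball costs nothing in dimension), the block count never enters as the full power $n^{d-1}$. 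The tail parts are collected into the single function $f^1=\sum_\bs t^1_\bs\in\Tr(Q_{n+d})$ with $\|f^1\|_2\ll T^{1-p/2}n^{(d-1)(1/2-1/p)}$ by near-orthogonality plus H\"older, and the entire $m$-dimensional budget is spent approximating $f^1$ via the known bound $d_m(\Tr(Q_{n+d})_2,L_\infty)\ll(2^n/m)^{1/2}n^{d/2}$ --- so no Nikol'skii inequality is ever applied to the remainder, which resolves the obstruction you flag. Balancing the two error terms gives $T=(2^n/m)^{1/p}n^{1/p}$, and it is this optimization of $T$ (not a per-block approximation rate) that produces the extra factor $n^{1/p}$ in the target. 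Since your write-up stops short of any such argument, the proposal as it stands does not prove the lemma.
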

\begin{proof} If $m=0$ then Lemma \ref{AL1} follows from (\ref{N}). Assume that $m\ge 1$. In case $p=2$ (\ref{A2'}) follows from the known case of (\ref{I1}). Clearly, (\ref{A2'}) implies (\ref{A2}). It is easy to see that also (\ref{A2}) implies (\ref{A2'}). So, we concentrate on the proof of (\ref{A2}).  We need the de la Vall{\' e}e Poussin kernels
$$
\Delta \V_\bs(\bx):= \prod_{j=1}^d (\V_{2^{s_j}}(x_j) - \V_{[2^{s_j-2}]}(x_j)),
$$
where $\V_N(x)$, $N\in \N$, is the classical univariate de la Vall{\' e}e Poussin kernels (see, for instance, \cite{VTbookMA}, p.10) and $\V_0(x)=0$. 
Let $\ast$ denote the convolution and let $\Delta V_\bs$ be the convolution operator with the kernel  $\Delta \V_\bs$. For a function $f\in L_1(\T^d)$ denote
$$
\de_\bs(f)(\bx) := \sum_{\bk\in \rho(\bs)} \hat f (\bk) e^{i(\bk,\bx)},\quad \hat f (\bk):=(2\pi)^{-d}\int_{\T^d} f(\bx)e^{-i(\bk,\bx)}d\bx.
$$
Then
$$
\de_\bs(f)\ast \Delta \V_\bs = \de_\bs(f),\qquad \|\Delta V_\bs\|_{L_p\to L_p} \le C(d),\quad 1\le p\le \infty.
$$
Let $f\in \Tr(\Delta Q_n)$. The following corollary of the Littlewood-Paley theorem is well known (see, for instance, \cite{VTbookMA}, p.513): For $p\in [2,\infty)$
\be\label{A3}
  \left(\sum_{\|\bs\|_1=n} \|\de_\bs(f)\|_p^p\right)^{1/p} \ll \|f\|_p.
\ee
For parameters $T>0$ and $p\in [1,\infty)$ define $g_T$ -- the lower cutoff and $g^T$ -- the upper cutoff of $g$:
$$
g_T(\bx)=g(\bx)\quad \text{if}\quad |g(\bx)|\le T\|g\|_p; \quad g_T(\bx)=0 \quad \text{otherwise}.
$$
Define $g^T:= g-g_T$. We note that the cutoff operator (truncation) is a nonlinear operator with the following property. For a function $g$ from a given subspace we cannot guarantee that $g_T$ belongs to the same subspace. 

Consider $\de_\bs(f)_T$ and $\de_\bs(f)^T$. Clearly,
$$
\|\de_\bs(f)_T\|_\infty \le T\|\de_\bs(f)\|_p.
$$
It is easy to check that
$$
\|\de_\bs(f)^T\|_2 \le T^{1-p/2}\|\de_\bs(f)\|_p.
$$
Define
$$
t^1_\bs := \de_\bs(f)^T\ast \Delta \V_\bs,\qquad t^2_\bs := \de_\bs(f)_T\ast \Delta \V_\bs.
$$
Then $t^1_\bs +t^2_\bs = \de_\bs(f)$ and
\be\label{A4}
\|t^1_\bs\|_2 \ll T^{1-p/2}\|\de_\bs(f)\|_p,
\ee
\be\label{A5}
\|t^2_\bs\|_\infty \ll T\|\de_\bs(f)\|_p.
\ee
Denote
$$
f^i:= \sum_{\|\bs\|_1=n} t^i_\bs,\qquad i=1,2.
$$
Then
$$
\|f^2\|_\infty \le \sum_{\|\bs\|_1=n} \|t^2_\bs\|_\infty \ll T \sum_{\|\bs\|_1=n} \|\de_\bs(f)\|_p
$$
$$
\ll T\left(\sum_{\|\bs\|_1=n} \|\de_\bs(f)\|_p^p\right)^{1/p}n^{(d-1)(1-1/p)} \ll T n^{(d-1)(1-1/p)}\|f\|_p.
$$
It is not difficult to see that
$$
\|f^1\|_2^2 \ll \sum_{\|\bs\|_1=n}\|t^1_\bs\|_2^2 \ll T^{2-p}\sum_{\|\bs\|_1=n}\|\de_\bs(f)\|_p^2 
$$
$$
\ll T^{2-p}\left(\sum_{\|\bs\|_1=n}\|\de_\bs(f)\|_p^p\right)^{2/p}n^{(d-1)(1-2/p)}\ll T^{2-p} n^{(d-1)(1-2/p)}\|f\|_p^2.
$$
Thus, for any $T$ we have
$$
d_m(\Tr(\Delta Q_n)_p,L_\infty) \ll T^{1-p/2}n^{(d-1)(1/2-1/p)}d_m(\Tr(Q_{n+d})_2,L_\infty) + Tn^{(d-1)(1-1/p)}.
$$
Using (\ref{A2'}) with $p=2$, we set $T$ such that
$$
T^{1-p/2}n^{(d-1)(1/2-1/p)}(2^n/m)^{1/2}n^{d/2} = Tn^{(d-1)(1-1/p)}
$$
and obtain 
$$
T^{p/2} = (2^n/m)^{1/2}n^{1/2}\quad \text{which implies} \quad T= (2^n/m)^{1/p}n^{1/p},
$$
and proves (\ref{A2}).

\end{proof}

Let $Q=\cup_{\bs\in E} \rho(\bs)$, where $E$ is a finite set. Denote for $ f\in \Tr(Q)$
$$
\|f\|_{\bH_p}:= \max_{\bs\in E}\|\delta_\bs(f)\|_p; \quad  \Tr(Q)_{\bH_p} := \{f\in \Tr(Q)\,:\,  \|f\|_{\bH_p}\le 1\}.
$$ 
The following Nikol'skii inequality for $\Tr(\rho(\bs))$ is known (see \cite{VTbookMA}, p.90, Theorem 3.3.2): For any $f\in \Tr(\rho(\bs))$ we have 
\be\label{Nd}
\|f\|_\infty \ll  2^{\|\bs\|_1/p} \|f\|_p.
\ee
Therefore, for any $f\in \Tr(Q_n)$ we have
\be\label{Nh}
\|f\|_\infty \ll  2^{n/p}n^{d-1} \|f\|_{\bH_p}.
\ee

\begin{Lemma}\label{AL2} Let $2\le p<\infty$. We have
\be\label{A2h}
d_m(\Tr(\Delta Q_n)_{\bH_p},L_\infty) \ll (2^n/\bar m)^{1/p}n^{d-1 +1/p},\quad m=0,1,\dots,
\ee
\be\label{A2'h}
d_m(\Tr(Q_n)_{\bH_p},L_\infty) \ll (2^n/\bar m)^{1/p}n^{d-1 +1/p},\quad m=0,1,\dots.
\ee
\end{Lemma}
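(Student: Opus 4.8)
The plan is to repeat the proof of Lemma~\ref{AL1} almost verbatim, the only structural change being that the Littlewood--Paley bound (\ref{A3}) is no longer available and must be replaced by the definition of the $\bH_p$-norm. As before I would reduce to proving (\ref{A2h}): the implication (\ref{A2'h})$\Rightarrow$(\ref{A2h}) is just monotonicity of widths under the inclusion $\Tr(\Delta Q_n)_{\bH_p}\subset\Tr(Q_n)_{\bH_p}$, while the reverse (\ref{A2h})$\Rightarrow$(\ref{A2'h}) follows from the same level-by-level decomposition $f=\sum_{l\le n}\sum_{\|\bs\|_1=l}\de_\bs(f)$ used to pass from (\ref{A2}) to (\ref{A2'}), distributing the $m$ approximating elements across the dyadic levels and summing a geometric series dominated by the top level $l=n$.

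First I would take $f\in\Tr(\Delta Q_n)$ with $\|f\|_{\bH_p}\le 1$, form the de la Vall\'ee Poussin blocks and the cutoff splitting $\de_\bs(f)=t^1_\bs+t^2_\bs$ exactly as in Lemma~\ref{AL1}, so that the block estimates (\ref{A4}) and (\ref{A5}) carry over unchanged. The crucial difference appears only when aggregating over $\bs$: whereas Lemma~\ref{AL1} invoked (\ref{A3}), here every block obeys $\|\de_\bs(f)\|_p\le\|f\|_{\bH_p}\le 1$ directly, and there are $\asymp n^{d-1}$ indices $\bs$ with $\|\bs\|_1=n$. Hence, setting $f^i=\sum_{\|\bs\|_1=n}t^i_\bs$, I expect the aggregate bounds
$$
\|f^2\|_\infty\le\sum_{\|\bs\|_1=n}\|t^2_\bs\|_\infty\ll T\sum_{\|\bs\|_1=n}\|\de_\bs(f)\|_p\ll Tn^{d-1}
$$
and
$$
\|f^1\|_2^2\ll\sum_{\|\bs\|_1=n}\|t^1_\bs\|_2^2\ll T^{2-p}\sum_{\|\bs\|_1=n}\|\de_\bs(f)\|_p^2\ll T^{2-p}n^{d-1},
$$
so that $\|f^1\|_2\ll T^{1-p/2}n^{(d-1)/2}$.

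Since $f^1\in\Tr(Q_{n+d})$ and $f=f^1+f^2$, approximating the normalized $f^1$ by the known $L_2$-width construction and retaining $f^2$ as the residual should give, for every $T>0$,
$$
d_m(\Tr(\Delta Q_n)_{\bH_p},L_\infty)\ll T^{1-p/2}n^{(d-1)/2}\,d_m(\Tr(Q_{n+d})_2,L_\infty)+Tn^{d-1}.
$$
Inserting the $p=2$ bound $d_m(\Tr(Q_{n+d})_2,L_\infty)\ll(2^n/m)^{1/2}n^{d/2}$ from (\ref{A2'}) and balancing the two terms, I anticipate the same optimal cutoff level $T=(2^n/m)^{1/p}n^{1/p}$ as in Lemma~\ref{AL1}; the second term then equals $(2^n/m)^{1/p}n^{d-1+1/p}$, which is precisely (\ref{A2h}).

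The point that requires care is the exponent bookkeeping. The $\bH_p$-counting replaces the Littlewood--Paley factors $n^{(d-1)(1-1/p)}$ and $n^{(d-1)(1-2/p)}$ of Lemma~\ref{AL1} by the plain cardinality factor $n^{d-1}$, which changes the final power of $n$ from $(d-1)(1-1/p)+1/p$ to $d-1+1/p$. I would verify that, in spite of these different $n$-powers, the balancing equation still produces the identical cutoff $T=(2^n/m)^{1/p}n^{1/p}$: a short computation shows that the $n$-exponent governing $T$ collapses to $-1/2$ in both settings. I would also check that convolution with $\Delta\V_\bs$ enlarges the frequency support only boundedly, so that $f^1$ genuinely lies in $\Tr(Q_{n+d})$ and the $p=2$ width legitimately applies.
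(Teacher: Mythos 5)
Your proposal is correct and follows essentially the same route as the paper: the identical cutoff splitting $\de_\bs(f)=t^1_\bs+t^2_\bs$ from Lemma~\ref{AL1}, with the Littlewood--Paley aggregation (\ref{A3}) replaced by the trivial bound $\|\de_\bs(f)\|_p\le 1$ times the cardinality $\asymp n^{d-1}$ of the level set, leading to the same balancing equation and the same $T=(2^nn/m)^{1/p}$. Your extra checks (that the balancing still yields the same $T$ despite the changed $n$-powers, and that $f^1\in\Tr(Q_{n+d})$) are exactly the points the paper relies on implicitly.
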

\begin{proof} In the case $m=0$ Lemma \ref{AL2} follows from (\ref{Nh}). Assume $m\ge 1$.  We use notations from the proof of Lemma \ref{AL1}. By (\ref{A4}) and (\ref{A5}) we obtain
\be\label{A4h}
\|t^1_\bs\|_2 \ll T^{1-p/2}\|\de_\bs(f)\|_p \le T^{1-p/2},
\ee
\be\label{A5h}
\|t^2_\bs\|_\infty \ll T\|\de_\bs(f)\|_p \le T.
\ee

Then
$$
\|f^2\|_\infty \le \sum_{\|\bs\|_1=n} \|t^2_\bs\|_\infty \ll T \sum_{\|\bs\|_1=n} 1 \ll T n^{d-1}.
$$
 It is not difficult to see that
$$
\|f^1\|_2^2 \ll \sum_{\|\bs\|_1=n}\|t^1_\bs\|_2^2 \ll T^{2-p}\sum_{\|\bs\|_1=n} 1 \ll  T^{2-p} n^{d-1}.
$$
 Thus, for any $T$ we have
$$
d_m(\Tr(\Delta Q_n)_{\bH_p},L_\infty) \ll T^{1-p/2}n^{(d-1)/2}d_m(\Tr(Q_{n+d})_2,L_\infty) + Tn^{d-1}.
$$
Using (\ref{A2'}) with $p=2$, we set $T$ such that
$$
T^{1-p/2}n^{(d-1)/2}(2^n/m)^{1/2}n^{d/2} = Tn^{d-1}
$$
and obtain 
$$
T^{p/2} = (2^n/m)^{1/2}n^{1/2}\quad \text{which implies} \quad T= (2^n/m)^{1/p}n^{1/p},
$$
and proves (\ref{A2h}).

\end{proof}

{\bf Comment 1.} Let us rewrite the right hand sides of (\ref{A2}) and (\ref{A2'}) in the form
$$
(2^n/m)^{1/p}n^{(d-1)(1-1/p)+1/p}=\left(\frac{2^n}{mn^{d-2}}\right)^{1/p} n^{d-1}
$$
and the right hand sides of (\ref{A2h}) and (\ref{A2'h}) in the form
$$
(2^n/m)^{1/p}n^{d-1 +1/p}=\left(\frac{2^nn}{m}\right)^{1/p} n^{d-1}.
$$
Then, depending on a specific $m$, the corresponding expressions either increase or decrease with 
$1/p$. Taking into account that $\Tr(Q)_p \subset \Tr(Q)_2$ for $p\ge 2$, we conclude that for some $m$ 
we obtain a better bound by applying Lemma \ref{AL1} or Lemma \ref{AL2} with $p=2$ and for other $m$ applying the lemmas with $p$. 

\section{Kolmogorov widths of classes of functions}
\label{Ac}

Lemmas \ref{AL1} and \ref{AL2} give the bound in the form
\be\label{Tr1}
d_m(A_n,L_\infty) \ll (2^n/\bar m)^{1/p}n^{a(p)}
\ee
with appropriate $A_n$ (either $\Tr(Q_n)_p$ or $\Tr(Q_n)_{\bH_p}$)  and $a(p)=a(p,d)$. Note that $a(p)$ satisfies the inequality
\be\label{Tr2}
a(p)-a(2) \ge 1/p-1/2.
\ee

 We begin with a general result. For a number $t$ and sets $A$ and $B$ denote
 $$
 tA:= \{tf\,:\,f\in A\},\qquad A\oplus B := \{f+g\,:\, f\in A,\,g\in B\}.
 $$

\begin{Theorem}\label{AT0} Let $d\ge 2$, $2<p<\infty$, and $1/p<r<1/2$. Suppose that  
$$
\bF^r_p:= \bF^r_p(\{A_n\}_{n=1}^\infty) = \bigoplus_{n=1}^\infty 2^{-rn}A_n,\quad A_n \subset \Tr(Q_n),
$$
with compact subsets $\{A_n\}$ satisfying (\ref{Tr1}) and $a(p)$ satisfying (\ref{Tr2}). 
Then
$$
d_m(\bF^r_p,L_\infty) \ll m^{-r} (\log m)^{a(2)+(1/2-r)(1/2-1/p)^{-1}(a(p)-a(2))}.
$$
\end{Theorem}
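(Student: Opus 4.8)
The plan is to transfer the estimate for the direct sum $\bF^r_p=\bigoplus_{n=1}^\infty 2^{-rn}A_n$ to the per-block widths furnished by (\ref{Tr1}), using the elementary subadditivity of Kolmogorov widths under direct sums. Concretely, if for each $n$ I pick a subspace $U_n$ with $\dim U_n=m_n$ nearly realizing $d_{m_n}(A_n,L_\infty)$, set $U=\sum_{n\le N}U_n$ (so $\dim U\le\sum_{n\le N}m_n$), and approximate $f=\sum_n 2^{-rn}f_n$ (with $f_n\in A_n$) by the corresponding combination while discarding the blocks $n>N$, then
$$
\inf_{u\in U}\|f-u\|_\infty\le\sum_{n=1}^{N}2^{-rn}d_{m_n}(A_n,L_\infty)+\sum_{n>N}2^{-rn}\sup_{f_n\in A_n}\|f_n\|_\infty .
$$
So I must (i) control the discarded tail, (ii) choose a level $N\asymp\log m$ and an allocation $\{m_n\}$ with $\sum_n m_n\le m$ minimizing the head, and (iii) read off the power of $\log m$.

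For the tail I would use (\ref{Tr1}) with $\bar m=1$ (the Nikol'skii bound behind the $m=0$ case), so $\sup_{f_n\in A_n}\|f_n\|_\infty\ll 2^{n/p}n^{a(p)}$ and $\sum_{n>N}2^{-rn}2^{n/p}n^{a(p)}\ll 2^{-(r-1/p)N}N^{a(p)}$, a convergent geometric sum because $r>1/p$. The head is the crux. Here I would use that the sets $A_n$ satisfy (\ref{Tr1}) not only for the given $p$ but also for $p=2$: since $\Tr(Q_n)_p\subset\Tr(Q_n)_2$ and $\Tr(Q_n)_{\bH_p}\subset\Tr(Q_n)_{\bH_2}$, I have both $d_{m_n}(A_n,L_\infty)\ll(2^n/m_n)^{1/p}n^{a(p)}$ and $d_{m_n}(A_n,L_\infty)\ll(2^n/m_n)^{1/2}n^{a(2)}$. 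As flagged in Comment 1, for each block I take the smaller of the two; the two bounds cross at $m_n\asymp 2^n n^{-\gamma}$, where $\gamma:=(a(p)-a(2))/(1/2-1/p)$, so the $p=2$ bound is used for $n\le n^*$ and the $p$-bound for $n>n^*$.

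I would fix the crossover $n^*\asymp\log m$ by the budget-balancing relation $m\asymp 2^{n^*}(n^*)^{-\gamma}$, i.e.\ $2^{n^*}\asymp m(\log m)^{\gamma}$, allocate the $m$ dimensions within each range by the standard Hölder/Lagrange rule — in the $2$-range $m_n\propto\bigl(2^{(1/2-r)n}n^{a(2)}\bigr)^{2/3}$ and in the $p$-range $m_n\propto\bigl(2^{-(r-1/p)n}n^{a(p)}\bigr)^{p/(p+1)}$ — and discard the blocks on which this allocation drops below one. Since $1/2-r>0$ the $2$-range contribution is dominated by its right endpoint $n^*$, and since $r-1/p>0$ the $p$-range contribution is dominated by its left endpoint $n^*$; inserting $2^{n^*}\asymp m(\log m)^{\gamma}$ into either dominant term gives, after the algebra, the common value $m^{-r}(\log m)^{a(2)+\theta(a(p)-a(2))}$ with $\theta:=(1/2-r)/(1/2-1/p)\in(0,1)$.

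The main obstacle is this matching: one has to verify that the two ranges yield the \emph{same} power of $\log m$, a cancellation built entirely from the three slopes $1/2-r$, $r-1/p$, $1/2-1/p$ and independent of $\gamma$, and that the discarded tail, the boundary block at $n^*$, and the truncated blocks all stay below this common value. This is where I expect the hypothesis (\ref{Tr2}) to enter: it is equivalent to $\gamma\ge-1$, which is exactly what keeps the dimension $\asymp 2^{n^*}(n^*)^{-\gamma}$ concentrated near the critical level from exceeding the ambient dimension $\asymp 2^{n^*}(n^*)^{d-1}$ of $\Tr(Q_{n^*})$ (using $d\ge2$), so that (\ref{Tr1}) — rather than the trivial full-resolution bound — governs and the geometric sums remain endpoint-dominated; note that in the $\bH_p$ case (\ref{Tr2}) holds with equality, and at $d=2$ this feasibility is borderline. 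Once feasibility and endpoint domination are secured, adding the head and tail estimates yields the claimed bound.
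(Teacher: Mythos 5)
Your proposal is correct and follows essentially the same route as the paper: block-wise subadditivity of the widths, full-dimensional subspaces to kill the low blocks, the $p=2$ bound of (\ref{Tr1}) with a geometrically weighted allocation below a crossover level $n_1$ with $2^{n_1}\asymp m(\log m)^{\gamma}$, the $p$-bound with a geometrically decaying allocation above it, and the balancing identity that makes the two endpoint contributions agree (your Lagrange profiles are just the paper's explicit choices $m_n=[2^n2^{(n_1-n)t}S^{-1}]$ and $[m2^{-\kappa(n-n_1)}]$, and your Nikol'skii tail is what the paper's $\bar m=\max(m,1)$ convention yields once $m_n$ hits zero). Your reading of where (\ref{Tr2}) enters --- feasibility $\gamma\ge -1\ge -(d-1)$ so that $n_1\ge n_0$ --- is consistent with, and in fact more explicit than, the paper.
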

\begin{proof} We have
$$
d_m(\bF^r_p,L_\infty) \le \sum_{n=0}^\infty 2^{-rn} d_{m_n} (A_n,L_\infty)
$$ 
for any sequence $\{m_n\}$ such that $\sum_n m_n \le m$. For a given $m\in \N$ we will bound $d_{C(d,p,r)m}$ instead of $d_m$. We will have three intervals of summation, which will be treated separately (see {\bf Comment 1} for explanation). Let $n_0$ be the largest satisfying $|Q_{n_0}| \le m$. For each $n<n_0$ set $m_n =
\dim \Tr(Q_n)$. Then the corresponding sum over $[0,n_0)$ vanishes. Let $S$ and $n_1$ be two parameters, which we will specify later. For $n\in [n_0,n_1]$ we set 
$$
m_n := [2^n 2^{(n_1-n)t} S^{-1}],\quad \text{with}\quad t\,:\, 2r<t<1.
$$
Using (\ref{Tr1}) with $p=2$ we find
$$
\sum_{n=n_0}^{n_1}2^{-rn} d_{m_n}(A_n,L_\infty) \ll \sum_{n=n_0}^{n_1} 2^{-rn}(2^n/\bar m_n)^{1/2}n^{a(2)} 
$$
\be\label{A6}
\ll S^{1/2}2^{-rn_1}n_1^{a(2)}.
\ee

For $n>n_1$ we set $m_n:= [m2^{-\kappa (n-n_1)}]$ with $\kappa$ such that $r>(1+\kappa)/p$.
Then, using (\ref{Tr1}) with $p$, we get
$$
 \sum_{n>n_1} 2^{-rn} d_{m_n}(A_n,L_\infty) \ll \sum_{n>n_1}  2^{-rn}(2^n/\bar m_n)^{1/p}n^{a(p)} 
 $$
 \be\label{A7}
 \ll 2^{-rn_1}(2^{n_1}/m)^{1/p}n_1^{a(p)} .
 \ee
 We choose $n_1$ and $S$ such that
 \be\label{A8}
 \sum_{n=n_0}^{n_1} m_n \asymp 2^{n_1} /S \asymp m
 \ee
 and
 \be\label{A9}
 S^{1/2}2^{-rn_1}n_1^{a(2)} \asymp 2^{-rn_1}(2^{n_1}/m)^{1/p}n_1^{a(p)}.
 \ee
 That is $S\asymp n_1^{(a(p)-a(2))(1/2-1/p)^{-1}}$. Then we obtain
 $$
 d_m(\bF^r_p,L_\infty) \ll S^{1/2-r}m^{-r} n_1^{a(2)} \asymp m^{-r} (\log m)^{a(2)+(1/2-r)(1/2-1/p)^{-1}(a(p)-a(2)) }.
 $$
Theorem \ref{AT0} is proved.

\end{proof}

We now proceed to applications of Theorem \ref{AT0} to classes of functions of mixed smoothness.
We define the class $\bW^r_{p}$ in the following way. For $r > 0$   the functions
$$
F_r(x):=1+2\sum_{k=1}^{\infty}k^{-r}\cos(kx - r\pi/2)
$$
are called {\it Bernoulli kernels}.
 Let  
$$
F_r (\bx) := \prod_{j=1}^d F_r (x_j)
$$
be the multivariate analog of the Bernoulli kernel. 
We denote by $\bW_{p}^r$ the class of functions
$f(\bx)$ representable in the form
$$
f(\bx) =\varphi(\bx)\ast F_r (\bx) :=
(2\pi)^{-d}\int_{\T^d}\varphi(\by)F_r (\bx-\by)d\by,
$$
where $\varphi\in L_p$ and $\|\varphi\|_p\le 1$.  Note that in the case of integer $r$ the class $\bW^r_{p}$   is equivalent to the class defined by restrictions on mixed derivatives (for more details see \cite{DTU}, Ch.3). 

It is well known (see, for instance, \cite{VTbookMA}, p.174, Theorem 4.4.9) that in the case $1<p<\infty$ the class $\bW^r_p$ is embedded into the class $\bF^r_p(\{\Tr(Q_n)_p\}_{n=1}^\infty)$.
Applying Theorem \ref{AT0} in the case of $\bW^r_p$ and using Lemma \ref{AL1}, which gives (\ref{Tr1}) with 
$a(p)= (d-1)(1-1/p)+1/p$, we obtain the following bound for the $ d_m(\bW^r_p,L_\infty)$.

\begin{Theorem}\label{AT1} Let $d\ge 2$, $2<p\le\infty$, and $1/p<r<1/2$. Then
$$
d_m(\bW^r_p,L_\infty) \ll m^{-r} (\log m)^{(d-2)(1-r)+1}.
$$
\end{Theorem}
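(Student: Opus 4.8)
The plan is to verify that $\bW^r_p$ fits the abstract framework of Theorem \ref{AT0} with $A_n=\Tr(Q_n)_p$ and then to read off the exponent by a short arithmetic computation. First I would invoke the cited embedding $\bW^r_p\hookrightarrow\bF^r_p(\{\Tr(Q_n)_p\}_{n=1}^\infty)$, valid for $1<p<\infty$; since Kolmogorov widths are monotone under inclusion of sets, it then suffices to bound $d_m(\bF^r_p,L_\infty)$. The sets $A_n=\Tr(Q_n)_p$ are unit balls of finite-dimensional subspaces, hence compact, and by Lemma \ref{AL1} (specifically (\ref{A2'})) they satisfy (\ref{Tr1}) with
$$
a(p)=(d-1)(1-1/p)+1/p.
$$

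Next I would check the structural requirement (\ref{Tr2}). A direct computation gives $a(2)=d/2$ and
$$
a(p)-a(2)=(d-1)(1-1/p)+1/p-d/2=(d-2)(1/2-1/p),
$$
which is nonnegative for $d\ge 2$ and $p\ge 2$; since $1/p-1/2\le 0$, the inequality $a(p)-a(2)\ge 1/p-1/2$ of (\ref{Tr2}) holds. Thus all hypotheses of Theorem \ref{AT0} are met and I may apply its conclusion.

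The payoff is then a one-line evaluation of the exponent appearing in Theorem \ref{AT0}. From $a(p)-a(2)=(d-2)(1/2-1/p)$ we get $(1/2-1/p)^{-1}(a(p)-a(2))=d-2$, so the power of the logarithm is
$$
a(2)+(1/2-r)(d-2)=\tfrac{d}{2}+(1/2-r)(d-2)=(d-1)-r(d-2)=(d-2)(1-r)+1,
$$
which is exactly the claimed exponent. This covers all finite $p$ with $2<p<\infty$.

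Finally I would dispose of the endpoint $p=\infty$, which is the only case not reached directly, since Theorem \ref{AT0} and Lemma \ref{AL1} are stated for finite $p$. Here I would use the trivial embedding $\bW^r_\infty\subset \bW^r_{p_0}$ coming from $\|\varphi\|_{p_0}\ll\|\varphi\|_\infty$ on the torus: choosing a finite $p_0>\max(2,1/r)$ keeps us in the admissible range $1/p_0<r<1/2$, and the exponent $(d-2)(1-r)+1$ does not depend on $p_0$, so the finite-$p_0$ bound transfers verbatim to $p=\infty$. The main (indeed only genuine) obstacle is keeping the exponent arithmetic straight; everything substantive is already contained in Theorem \ref{AT0} and Lemma \ref{AL1}.
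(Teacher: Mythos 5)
Your proposal is correct and follows exactly the paper's route: embed $\bW^r_p$ into $\bF^r_p(\{\Tr(Q_n)_p\})$, apply Theorem \ref{AT0} with $a(p)=(d-1)(1-1/p)+1/p$ from Lemma \ref{AL1}, and evaluate the exponent; your arithmetic ($a(2)=d/2$, $a(p)-a(2)=(d-2)(1/2-1/p)$, final exponent $(d-2)(1-r)+1$) all checks out. Your explicit verification of (\ref{Tr2}) and the reduction of $p=\infty$ to a finite $p_0$ via $\bW^r_\infty\subset\bW^r_{p_0}$ are details the paper leaves implicit, but they are handled correctly.
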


 We now turn our discussion to the classes $\bH^r_p$.
Let  $\btt =(t_1,\dots,t_d )$ and $\Delta_{\btt}^l f(\bx)$
be the mixed $l$-th difference with
step $t_j$ in the variable $x_j$, that is
$$
\Delta_{\btt}^l f(\bx) :=\Delta_{t_d,d}^l\dots\Delta_{t_1,1}^l
f(x_1,\dots ,x_d ) .
$$
Let $e$ be a subset of natural numbers in $[1,d ]$. We denote
$$
\Delta_{\btt}^l (e) =\prod_{j\in e}\Delta_{t_j,j}^l,\qquad
\Delta_{\btt}^l (\varnothing) = I .
$$
We define the class $\bH_{p,l}^r B$, $l > r$, as the set of
$f\in L_p$ such that for any $e$
\be\label{B7a}
\bigl\|\Delta_{\btt}^l(e)f(\bx)\bigr\|_p\le B
\prod_{j\in e} |t_j |^r .
\ee
In the case $B=1$ we omit it. It is known (see, for instance, \cite{VTbookMA}, p.137) that the classes $\bH^r_{p,l}$ with different $l$ are equivalent. So, for convenience we fix one $l= [r]+1$ and omit $l$ from the notation. 

It is well known (see, for instance, \cite{VTbookMA}, p.171, Theorem 4.4.6) that in the case $1<p<\infty$ the class $\bH^r_p$ is embedded into the class $\bF^r_p(\{\Tr(Q_n)_{\bH_p}\}_{n=1}^\infty)$.
Applying Theorem \ref{AT0} in the case of $\bH^r_p$ and using Lemma \ref{AL2}, which gives (\ref{Tr1}) with 
$a(p)= d-1+1/p$, we obtain the following bound for the $ d_m(\bH^r_p,L_\infty)$.

\begin{Theorem}\label{AT1h} Let $d\ge 2$, $2<p\le\infty$, and $1/p<r<1/2$. Then
$$
d_m(\bH^r_p,L_\infty) \ll m^{-r} (\log m)^{d-1+r}.
$$
\end{Theorem}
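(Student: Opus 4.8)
The plan is to obtain Theorem \ref{AT1h} as a direct specialization of the general estimate in Theorem \ref{AT0}, fed by the finite-dimensional bound of Lemma \ref{AL2}, and then to dispose of the endpoint $p=\infty$ by a separate monotonicity argument. First I would fix $p$ with $2<p<\infty$. The embedding recalled just above the statement says that $\bH^r_p$ sits (up to a constant factor) inside $\bF^r_p(\{\Tr(Q_n)_{\bH_p}\}_{n=1}^\infty)$; since Kolmogorov widths are monotone under set inclusion and homogeneous under scaling, this gives $d_m(\bH^r_p,L_\infty)\ll d_m(\bF^r_p(\{\Tr(Q_n)_{\bH_p}\}),L_\infty)$. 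Lemma \ref{AL2} supplies exactly the hypothesis (\ref{Tr1}) for the family $A_n=\Tr(Q_n)_{\bH_p}$, with exponent $a(p)=d-1+1/p$.

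Before invoking Theorem \ref{AT0} I would check condition (\ref{Tr2}). Here $a(p)-a(2)=(d-1+1/p)-(d-1+1/2)=1/p-1/2$, so (\ref{Tr2}) holds, in fact with equality, and for $1/p<r<1/2$ all hypotheses of Theorem \ref{AT0} are met. The remaining step is purely computational: substituting $a(2)=d-1/2$ and $a(p)-a(2)=-(1/2-1/p)$ into the exponent produced by Theorem \ref{AT0}, the factor $(1/2-1/p)^{-1}(a(p)-a(2))$ collapses to $-1$, so the exponent becomes $a(2)+(1/2-r)(-1)=(d-1/2)-(1/2-r)=d-1+r$. This yields $d_m(\bH^r_p,L_\infty)\ll m^{-r}(\log m)^{d-1+r}$ for every finite $p\in(2,\infty)$. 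I would stress that the exponent is \emph{independent of $p$}, which is precisely the feature that makes the endpoint tractable.

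Finally, for $p=\infty$ I would argue by monotonicity of the classes rather than re-running the widths estimate. Since $\|g\|_p\le\|g\|_\infty$ on $\T^d$, the defining inequalities (\ref{B7a}) in the $L_\infty$ norm are stronger than those in any $L_p$ norm, whence $\bH^r_\infty\subset\bH^r_p$. Choosing any finite $p>2$ with $1/p<r$ (possible because $r>0$ is fixed and $r<1/2$ forces no competing constraint on large $p$), monotonicity of the widths gives $d_m(\bH^r_\infty,L_\infty)\le d_m(\bH^r_p,L_\infty)\ll m^{-r}(\log m)^{d-1+r}$, which covers all admissible $p$.

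I do not expect a genuine obstacle here, since the heavy lifting has been carried out in Lemma \ref{AL2} and Theorem \ref{AT0}. The only points requiring care are the bookkeeping of the exponent, the verification that (\ref{Tr2}) is satisfied (with equality) so that Theorem \ref{AT0} is applicable, and the elementary reduction of the $p=\infty$ case to a finite $p$ via the inclusion $\bH^r_\infty\subset\bH^r_p$.
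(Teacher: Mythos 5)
Your proposal is correct and follows the paper's own route: embed $\bH^r_p$ into $\bF^r_p(\{\Tr(Q_n)_{\bH_p}\}_{n=1}^\infty)$, invoke Theorem \ref{AT0} with the value $a(p)=d-1+1/p$ supplied by Lemma \ref{AL2} (so that (\ref{Tr2}) holds with equality), and compute the exponent $a(2)+(1/2-r)(1/2-1/p)^{-1}(a(p)-a(2))=d-1+r$. Your explicit reduction of the endpoint $p=\infty$ to a finite $p$ via the inclusion $\bH^r_\infty\subset\bH^r_p$ is a small detail the paper leaves implicit, but it is the standard argument and does not constitute a different approach.
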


We now discuss the case $r=1/2$. As above we begin with a conditional theorem.

\begin{Theorem}\label{ATcond} Let $d\ge 2$, $2<p<\infty$. Suppose that 
$$
\bF^r_p = \bigoplus_{n=1}^\infty 2^{-n/2}A_n,\quad A_n \subset \Tr(Q_n),
$$
with $\{A_n\}$ satisfying (\ref{Tr1}) and $a(p)$ satisfying (\ref{Tr2}). 
Then
$$
d_m(\bF^r_p,L_\infty) \ll m^{-r} (\log m)^{a(2)} (\log \log m)^{3/2}.
$$
\end{Theorem}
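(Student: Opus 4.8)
The plan is to follow the three–region decomposition used in the proof of Theorem \ref{AT0}, changing only the way the budget is allocated in the middle region, which must change because here $r=1/2$ is exactly the critical exponent. As there, I would start from
$$
d_m(\bF^r_p,L_\infty)\le \sum_{n=0}^\infty 2^{-n/2} d_{m_n}(A_n,L_\infty),\qquad \sum_n m_n\le C(d,p)\,m,
$$
and bound $d_{C(d,p)m}$ in place of $d_m$. Let $n_0$ be the largest integer with $|Q_{n_0}|\le m$, so that $n_0\asymp\log m$ and $2^{n_0}\asymp m/(\log m)^{d-1}$. For $n<n_0$ I set $m_n=\dim\Tr(Q_n)$; since $A_n\subset\Tr(Q_n)$ these terms vanish, and the budget they consume is $\sum_{n<n_0}|Q_n|\asymp|Q_{n_0}|\asymp m$.

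The decisive point is that at $r=1/2$ the bound (\ref{Tr1}) with $p=2$ makes each level contribute, after the weight $2^{-n/2}$,
$$
2^{-n/2}d_{m_n}(A_n,L_\infty)\ll 2^{-n/2}(2^n/\bar m_n)^{1/2}n^{a(2)}=\bar m_n^{-1/2}\,n^{a(2)},
$$
which carries no geometric decay in $n$. Hence the geometric choice $m_n=[2^n2^{(n_1-n)t}S^{-1}]$ of Theorem \ref{AT0}, which demanded $2r<t<1$, is unavailable (that window is empty when $r=1/2$). Instead I would use only a short band of $w:=n_1-n_0$ levels carrying the $L_2$ estimate, with $w\asymp\log\log m$ chosen so that $2^{w}\asymp(\log m)^{c_1}$ for a large constant $c_1=c_1(p,d)$ fixed below, and split the budget evenly: $m_n:=\lfloor m/w\rfloor$ for $n\in[n_0,n_1]$, so that $\sum_{n=n_0}^{n_1}m_n\asymp m$ and (\ref{Tr1}) applies. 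Since $n\asymp n_1\asymp\log m$ throughout this short band, the scale-free terms sum to
$$
\sum_{n=n_0}^{n_1}\bar m_n^{-1/2}n^{a(2)}\ll (w/m)^{1/2}\cdot w\,n_1^{a(2)}=w^{3/2}m^{-1/2}n_1^{a(2)}\asymp (\log\log m)^{3/2}m^{-1/2}(\log m)^{a(2)}.
$$
The exponent $3/2$ then appears transparently: one half-power of $w$ from $\bar m_n^{-1/2}=(w/m)^{1/2}$, and one full power from summing the nearly constant weights $n^{a(2)}$ over the $w$ levels.

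For $n>n_1$ I would reuse verbatim the large-frequency estimate of Theorem \ref{AT0}: with $m_n:=[m2^{-\kappa(n-n_1)}]$ and $\kappa>0$ chosen so that $1/2>(1+\kappa)/p$ (possible since $p>2$), the bound (\ref{Tr1}) with $p$ gives $\sum_{n>n_1}2^{-n/2}d_{m_n}(A_n,L_\infty)\ll 2^{-n_1(1/2-1/p)}m^{-1/p}n_1^{a(p)}$ and $\sum_{n>n_1}m_n\asymp m$. Using $2^{n_1}\asymp 2^{n_0}2^{w}\asymp m(\log m)^{c_1-(d-1)}$, this tail is $\ll m^{-1/2}(\log m)^{a(p)-(c_1-(d-1))(1/2-1/p)}$, which is $\le m^{-1/2}(\log m)^{a(2)}$ once $c_1\ge (d-1)+(a(p)-a(2))(1/2-1/p)^{-1}$; the right-hand side is a fixed finite number, so some $c_1=c_1(p,d)$ suffices. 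Summing the three regions gives the stated bound.

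The hard part is the middle region. Because the per-level contribution loses all geometric decay at the critical smoothness, the entire argument turns on identifying the correct band width $w\asymp\log\log m$ and on checking that this single choice simultaneously keeps the middle contribution at size $(\log\log m)^{3/2}m^{-1/2}(\log m)^{a(2)}$ and drives the large-frequency tail below $m^{-1/2}(\log m)^{a(2)}$. In Theorem \ref{AT0} these two demands were met by the two free parameters $S$ and $n_1$; here they must be balanced through the single parameter $c_1$, and verifying that enlarging $c_1$ only inflates the middle constant (harmlessly, via $\ll$) while shrinking the tail is the delicate step.
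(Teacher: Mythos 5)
Your proof is correct and follows essentially the same route as the paper: the paper's proof of Theorem \ref{ATcond} also uses the three-interval decomposition with a \emph{constant} allocation $m_n=[2^{n_1}S^{-1}]$ on the middle band and the same geometric tail for $n>n_1$, and its implicit balancing conditions (\ref{A11})--(\ref{A12}) produce exactly your explicit choice $n_1-n_0\asymp\log\log m$, $m_n\asymp m/(n_1-n_0)$. Your version merely makes the band width and the origin of the $(\log\log m)^{3/2}$ factor explicit, which the paper leaves to the reader.
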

\begin{proof} This proof repeats the argument from the proof of Theorem \ref{AT0}. For $n\in [n_0,n_1]$ we set 
$$
m_n := [2^{n_1}S^{-1}]. 
$$
Using (\ref{Tr1}) with $p=2$ we find
$$
\sum_{n=n_0}^{n_1}2^{-n/2} d_{m_n}(\Tr(\Delta Q_n)_p,L_\infty) \ll \sum_{n=n_0}^{n_1} 2^{-n/2}(2^n/\bar m_n)^{1/2}n^{a(2)} 
$$
\be\label{A10}
\ll S^{1/2}2^{-n_1/2}n_1^{a(2)}(n_1-n_0).
\ee
The third sum is estimated as in (\ref{A7}) with $r=1/2$. We choose $n_1$ and $S$ such that
 \be\label{A11}
 \sum_{n=n_0}^{n_1} m_n \asymp 2^{n_1}S^{-1}(n_1-n_0) \asymp m
 \ee
 and
 \be\label{A12}
 S^{1/2}2^{-n_1/2}n_1^{a(2)}(n_1-n_0) \asymp 2^{-n_1/2}(2^{n_1}/m)^{1/p}n_1^{a(p)}.
 \ee
 This gives the required bound. 

\end{proof}

Applying Theorem \ref{ATcond} in the case of $\bW^r_p$ and using Lemma \ref{AL1}, which gives (\ref{Tr1}) with 
$a(p)= (d-1)(1-1/p)+1/p)$, we obtain the following bound for the $ d_m(\bW^{1/2}_p,L_\infty)$.

\begin{Theorem}\label{AT2} Let $d\ge 2$, $2<p\le\infty$, and $r=1/2$. Then
$$
d_m(\bW^{1/2}_p,L_\infty) \ll m^{-1/2} (\log m)^{d/2} (\log \log m)^{3/2}.
$$
\end{Theorem}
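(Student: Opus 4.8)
The plan is to obtain Theorem \ref{AT2} as a direct specialization of the conditional Theorem \ref{ATcond}, in complete analogy with the way Theorem \ref{AT1} is obtained from Theorem \ref{AT0}. All of the genuinely analytic work — the three-interval splitting of the series $\sum_n 2^{-n/2}d_{m_n}(A_n,L_\infty)$ together with the balancing in (\ref{A11})--(\ref{A12}) that manufactures the $(\log\log m)^{3/2}$ factor — is already packaged inside Theorem \ref{ATcond}. Hence what remains is only to check that the class $\bW^{1/2}_p$ satisfies the hypotheses of that theorem and then to read off the correct exponents.

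First I would invoke the (well-known, quoted above) embedding: for $1<p<\infty$ the class $\bW^r_p$ is contained in $\bF^r_p(\{\Tr(Q_n)_p\}_{n=1}^\infty)$, so it suffices to bound $d_m$ of the latter. Taking $A_n=\Tr(Q_n)_p$, Lemma \ref{AL1} supplies (\ref{Tr1}) with $a(p)=(d-1)(1-1/p)+1/p$. I would then verify condition (\ref{Tr2}): a short computation gives $a(p)-a(2)=(d-2)(1/2-1/p)$, and since $1/2-1/p>0$ for $p>2$, the required inequality $a(p)-a(2)\ge 1/p-1/2$ reduces to $d-2\ge -1$, which holds for all $d\ge 2$. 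With both hypotheses confirmed, Theorem \ref{ATcond} applied with $r=1/2$ yields
$$
d_m(\bW^{1/2}_p,L_\infty) \ll m^{-1/2}(\log m)^{a(2)}(\log\log m)^{3/2}.
$$
Evaluating $a(2)=(d-1)/2+1/2=d/2$ gives the claimed bound for every finite $p\in(2,\infty)$; note that $a(2)$ is independent of $p$, so the exponents are identical throughout this range.

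It then remains to cover the endpoint $p=\infty$, the only case not handled verbatim by Theorem \ref{ATcond}, whose hypothesis — like the Littlewood--Paley embedding behind Lemma \ref{AL1} — requires $p<\infty$. Here I would argue by inclusion: on the normalized torus one has $\|\varphi\|_q\le\|\varphi\|_\infty$ for every finite $q$, whence $\bW^{1/2}_\infty\subset\bW^{1/2}_q$ and therefore $d_m(\bW^{1/2}_\infty,L_\infty)\le d_m(\bW^{1/2}_q,L_\infty)$; fixing any $q\in(2,\infty)$ and applying the case already proved delivers the same bound, with a constant depending on $q$ and $d$. I do not anticipate a real obstacle in any of this: the substance resides entirely in Theorem \ref{ATcond}, and the remaining steps are the embedding, the one-line verification of (\ref{Tr2}), the evaluation $a(2)=d/2$, and the elementary endpoint inclusion.
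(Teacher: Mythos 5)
Your proof is correct and follows essentially the same route as the paper: the paper likewise obtains Theorem \ref{AT2} by applying Theorem \ref{ATcond} with $A_n=\Tr(Q_n)_p$, via the embedding of $\bW^{1/2}_p$ into $\bF^{1/2}_p(\{\Tr(Q_n)_p\}_{n=1}^\infty)$ and Lemma \ref{AL1}, and then reading off $a(2)=d/2$. Your explicit verification of (\ref{Tr2}) and the inclusion argument for the endpoint $p=\infty$ are details the paper leaves implicit, but they do not change the approach.
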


Applying Theorem \ref{AT0} in the case of $\bH^r_p$ and using Lemma \ref{AL2}, which gives (\ref{Tr1}) with 
$a(p)= d-1+1/p)$, we obtain the following bound for the $ d_m(\bH^{1/2}_p,L_\infty)$.

\begin{Theorem}\label{AT2h} Let $d\ge 2$, $2<p\le\infty$, and $r=1/2$. Then
$$
d_m(\bH^{1/2}_p,L_\infty) \ll m^{-1/2} (\log m)^{d-1/2} (\log \log m)^{3/2}.
$$
\end{Theorem}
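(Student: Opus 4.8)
The plan is to obtain Theorem \ref{AT2h} exactly as the other smoothness-class estimates were obtained from their conditional versions: I would feed the block estimate of Lemma \ref{AL2} into the conditional Theorem \ref{ATcond} for the critical exponent $r=1/2$. (Note that although the sentence preceding the statement refers to Theorem \ref{AT0}, the correct input here is Theorem \ref{ATcond}, since $r=1/2$; it is precisely the $r=1/2$ argument that manufactures the extra $(\log\log m)^{3/2}$ factor.) The first step is to recall the embedding quoted above: for $1<p<\infty$ the class $\bH^r_p$ sits inside $\bF^r_p(\{\Tr(Q_n)_{\bH_p}\}_{n=1}^\infty)$. By monotonicity of the Kolmogorov widths under inclusion of sets, this reduces the problem to bounding $d_m(\bF^{1/2}_p,L_\infty)$ for the block class $A_n=\Tr(Q_n)_{\bH_p}$.

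The second step is to verify the hypotheses of Theorem \ref{ATcond} for these blocks and read off the exponent. By Lemma \ref{AL2} (in particular (\ref{A2'h})), the sets $A_n=\Tr(Q_n)_{\bH_p}$ satisfy (\ref{Tr1}) with
$$
a(p)=d-1+1/p,\qquad a(2)=d-1/2 .
$$
Then $a(p)-a(2)=1/p-1/2$, so the admissibility condition (\ref{Tr2}) holds (indeed with equality). Plugging $r=1/2$ and $a(2)=d-1/2$ into the conclusion of Theorem \ref{ATcond} yields immediately
$$
d_m(\bF^{1/2}_p,L_\infty)\ll m^{-1/2}(\log m)^{d-1/2}(\log\log m)^{3/2},
$$
which, via the embedding of the first step, is the asserted bound in the range $2<p<\infty$.

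The third and final step handles the endpoint $p=\infty$, which lies outside the scope of Theorem \ref{ATcond} and Lemma \ref{AL2}. Here I would argue by inclusion: since $\|g\|_q\le\|g\|_\infty$ on the torus, the defining difference inequalities (\ref{B7a}) for $\bH^{1/2}_\infty$ remain valid in every $L_q$, giving $\bH^{1/2}_\infty\subset C(d)\,\bH^{1/2}_q$ for any finite $q$. Fixing any $q\in(2,\infty)$ and invoking the bound just proved — whose $\log m$ exponent $d-1/2$ is independent of $q$ — transfers the estimate to $p=\infty$. I expect no genuine obstacle in this proof: the entire argument is bookkeeping on top of Theorem \ref{ATcond}, and the only point requiring attention is to use the $r=1/2$ conditional theorem (rather than Theorem \ref{AT0}) so that the double-logarithmic factor appears correctly.
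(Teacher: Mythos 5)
Your proposal is correct and follows essentially the same route as the paper: embed $\bH^{1/2}_p$ into $\bF^{1/2}_p(\{\Tr(Q_n)_{\bH_p}\}_{n=1}^\infty)$, read off $a(p)=d-1+1/p$, $a(2)=d-1/2$ from Lemma \ref{AL2}, and invoke the $r=1/2$ conditional Theorem \ref{ATcond}. You are also right that the paper's citation of Theorem \ref{AT0} just before this statement is a slip for Theorem \ref{ATcond} (compare the parallel derivation of Theorem \ref{AT2}), and your inclusion argument $\bH^{1/2}_\infty\subset \bH^{1/2}_q$ for the endpoint $p=\infty$ supplies a detail the paper leaves implicit.
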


{\bf Comment 2.} For the possible future applications of the above approach we list the properties, which were used above.
We begin with Lemma \ref{AL1}. We derived (\ref{A2}) from (\ref{I1}) with $p=2$ formulated for $Q_{n+d}$ with the help of the following two simple properties.

{\bf P1.} For any positive $t$ and any set $A\subset L_\infty$ we have $d_m(tA,L_\infty) = td_m(A,L_\infty)$.

{\bf P2.} For any set $A\subset L_\infty$ and any positive $t$ we have
$$
d_m(A\oplus tB(L_\infty),L_\infty) \le C(d_m(A,L_\infty)+t),
$$
where $B(L_\infty)$ is the unit ball of the $L_\infty$. 

We now proceed to Theorems \ref{AT1} and \ref{AT2}. Proofs of these theorems are based on the bound (\ref{A2}) and on the following properties 

{\bf P3.} For any sequence $\{m_n\}$, $m_n\in \N$, such that $\sum_{n=1}^\infty m_n \le m$ and any sequence of sets $\{A_n\}$ we have
$$
d_m\left(\bigoplus_{n=1}^\infty A_n,L_\infty\right) \le \sum_{n=1}^\infty d_{m_n}(A_n,L_\infty).
$$

{\bf P4.} For all $n$ such that $\dim \Tr(Q_n) \le m$ we have $d_m(\Tr(Q_n)_p,L_\infty)=0$. 

\section{The case $d=2$}
\label{B}

Results of this section should be considered known. Some of them are explicitly written. In such a case we give a reference. Some results are the folklore results and others are simple corollaries of the above results. 
Consider $d$-dimensional parallelepipeds
$$
\Pi(\mathbf N,d) :=\bigl \{\mathbf a\in \Z^d  : |a_j|\le N_j,\
j = 1,\dots,d \bigr\} ,
$$
where $N_j$ are nonnegative integers and the corresponding subspaces of the trigonometric polynomials
$$
\Tr(\bN,d) := \Tr(\Pi(\bN,d)).
$$
Then $\dim \Tr(\bN,d) = \vartheta(\mathbf N) := \prod_{j=1}^d (2N_j  + 1)$.
The following finite dimensional result is well known: For any natural numbers $n,  m$,  $m < n$ we have
\be\label{B1}
d_m(B_p^n,  \ell_{\infty}^n)\le C m^{-1/p} \bigl(\ln (en/m) \bigr)^{1/p},\quad 2\le p <\infty.
\ee
The reader can find a simple proof of (\ref{B1}) in the case $p=2$ and a historical discussion in \cite{VTbookMA}
(see Theorem 2.1.11 there). Also, we refer the reader for historical comments to \cite{DTU}, p.55. We refer the reader for a further discussion to \cite{FPRU}. The case $2<p<\infty$ can be easily derived from the case $p=2$ using the cutoff operator.   Bound (\ref{B1}) and the Marcinkiewicz discretization theorem (\cite{VTbookMA}, p.102, Theorem 3.3.15) imply for $m\ge 1$
\be\label{B2}
d_m \bigl(\Tr(\mathbf N,d)_p, L_{\infty}\bigr)\ll\bigl(\vartheta(\mathbf N)/
m\bigr)^{1/p}\ln \bigl(e\vartheta(\mathbf N)/m) \bigr)^{1/p}.
\ee
The following Nikol'skii inequality for $\Tr(\bN,d)$ is known (see \cite{VTbookMA}, p.90, Theorem 3.3.2): For any $f\in \Tr(\N,d)$ we have 
\be\label{B2'}
\|f\|_\infty \ll  \vartheta(\mathbf N)^{1/p} \|f\|_p.
\ee
Let $d=2$.  Then we have for $2\le p<\infty$
$$
d_m \bigl(\Tr\bigl(\Delta Q_n)_{\bH_p}, L_{\infty}\bigr)\le
\sum_{\|\bs\|_1=n}d_{[m/n]}
\Tr\bigl(\rho(\bs)\bigr)_p,L_{\infty}\bigr).
$$ 
Using (\ref{B2}) and (\ref{B2'}) we obtain from here
\be\label{B3}
d_m \bigl(\Tr\bigl(\Delta Q_n)_{\bH_p}, L_{\infty}\bigr) \ll \left(\frac{|\Delta Q_n|}{m}\right)^{1/p}\left(\log\frac{|\Delta Q_n|}{m}\right)^{1/p}n .
\ee
We point out that Lemma \ref{AL2} gives a little better bound than (\ref{B3}). However, this does not affect the bound for the function class $\bH^r_p$.
\begin{Proposition}\label{BP1} Let $d=2$, $2\le p\le\infty$. Then, for $r>1/p$ we have
$$
d_m(\bH^r_p,L_\infty) \ll m^{-r}(\log m)^{r+1}.
$$
\end{Proposition}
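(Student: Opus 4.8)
The plan is to reuse verbatim the dyadic balancing scheme from the proof of Theorem~\ref{AT0}, only feeding in the planar estimate (\ref{B3}) in place of the interpolation through $p=2$; the whole point of the hypotheses $d=2$ and $r>1/p$ is that they turn the block series into a convergent \emph{geometric} series, which both extends the admissible range to all $r>1/p$ and removes the $\log\log m$ loss present in the critical case. First I would invoke the block characterization of $\bH^r_p$ (the embedding recalled just before Theorem~\ref{AT1h}): every $f\in\bH^r_p$ obeys $\max_{\|\bs\|_1=n}\|\delta_\bs(f)\|_p\ll 2^{-rn}$, so that $\bH^r_p$ sits, up to an absolute constant, inside $\bigoplus_{n\ge1}2^{-rn}\Tr(\Delta Q_n)_{\bH_p}$. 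Property \textbf{P3} then gives, for every allocation $\{m_n\}$ with $\sum_n m_n\le m$, the bound $d_m(\bH^r_p,L_\infty)\ll\sum_{n\ge1}2^{-rn}d_{m_n}(\Tr(\Delta Q_n)_{\bH_p},L_\infty)$, and it suffices to bound $d_{Cm}$.

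Next I would fix $n_0$ as the largest index with $|Q_{n_0}|\le m$. Since $d=2$ forces $|\Delta Q_n|\asymp 2^n n$ and $|Q_{n_0}|\asymp 2^{n_0}n_0\asymp m$, this gives $n_0\asymp\log m$ and $2^{-rn_0}\asymp(n_0/m)^r$. For $n\le n_0$ I set $m_n=\dim\Tr(\Delta Q_n)\asymp 2^n n$, so that these widths vanish (the $\Delta Q_n$-version of \textbf{P4}) while consuming a budget $\sum_{n\le n_0}|\Delta Q_n|\asymp|Q_{n_0}|\asymp m$. For $n>n_0$ I set $m_n=[m\,2^{-\kappa(n-n_0)}]$ with $\kappa$ chosen so that $0<\kappa<rp-1$ — possible precisely because $r>1/p$ makes $rp-1>0$; this keeps the remaining budget $\sum_{n>n_0}m_n\ll m$.

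Finally I would insert these choices into (\ref{B3}). Writing $j=n-n_0$ and using $2^{n_0}\asymp m/n_0$, one finds $2^n/m_n\asymp 2^{(1+\kappa)j}/n_0$ and $\log(|\Delta Q_n|/m_n)\asymp j$, so a generic tail term equals $2^{-rn_0}n_0^{-1/p}(n_0+j)^{1+1/p}j^{1/p}2^{-cj}$ with $c:=r-(1+\kappa)/p>0$. The geometric factor $2^{-cj}$ dominates the polynomial growth in $j$, so the series is carried by the blocks with $j\asymp1$ near $n_0$ and sums to $\asymp 2^{-rn_0}n_0$; together with $2^{-rn_0}\asymp(n_0/m)^r$ and $n_0\asymp\log m$ this yields $m^{-r}n_0^{r+1}\asymp m^{-r}(\log m)^{r+1}$. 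The residual blocks for which the allocation would fall below one are estimated by the Nikol'skii inequality (\ref{Nh}) (i.e. by $d_0$), and the same condition $r>1/p$ makes their contribution a geometrically smaller remainder; the endpoint $p=\infty$ then follows from the inclusion $\bH^r_\infty\subset\bH^r_{p'}$ for any finite $p'>1/r$, since the exponent $r+1$ does not depend on $p$. The one step genuinely requiring care is confirming $\log(|\Delta Q_n|/m_n)\asymp j$ and that the polynomial-times-geometric sum is controlled by its first terms, so that the linear factor $n_0$ — hence the exponent $r+1$, with no spurious extra power and no $\log\log m$ — comes out correctly; the strict inequality $c>0$, equivalent to $r>1/p$, is exactly what secures this.
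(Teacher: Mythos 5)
Your proposal is correct and follows essentially the same route as the paper: embed $\bH^r_p$ into $\bigoplus_n 2^{-rn}\Tr(\Delta Q_n)_{\bH_p}$, spend the full dimension on blocks up to $n_0$ with $|Q_{n_0}|\le m$, allocate a geometrically decaying budget for $n>n_0$, and sum the two\mbox{-}dimensional bound (\ref{B3}); your condition $0<\kappa<rp-1$ for the allocation $m_n=[m2^{-\kappa(n-n_0)}]$ plays exactly the role of the paper's choice $m_n=[|\Delta Q_n|2^{-\kappa(n-n_0)}]$ with $1<\kappa<rp$, both hinging on $r>1/p$. Your explicit treatment of the blocks with $m_n=0$ via (\ref{Nh}) and of the endpoint $p=\infty$ by inclusion are details the paper leaves implicit, but they do not change the argument.
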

\begin{proof} We have
$$
d_m(\bH^r_p,L_\infty) \ll \sum_{n=0}^\infty 2^{-rn} d_{m_n} (\Tr(\Delta Q_n)_{\bH_p},L_\infty)
$$ 
for any sequence $\{m_n\}$ such that $\sum_n m_n \le m$. For a given $m\in \N$ we will bound $d_{C(d,p,r)m}$ instead of $d_m$. We will have two intervals of summation, which will be treated separately. Let $n_0$ be the largest satisfying $|Q_{n_0}| \le m$. For each $n<n_0$ set $m_n =
\dim \Tr(\Delta Q_n)$. Then the corresponding sum over $[0,n_0)$ vanishes. 
For $n>n_0$ we set $m_n:= [|\Delta Q_n|2^{-\kappa (n-n_0)}]$ with $\kappa >1$ such that $r>\kappa/p$.
Then, using (\ref{B3}), we get
$$
 \sum_{n>n_0} 2^{-rn} d_{m_n}(\Tr(\Delta Q_n)_{\bH_p},L_\infty) \ll  2^{-rn_0}n_0 , 
 $$
 which proves Proposition \ref{BP1}. 
\end{proof}

Let $X$ be a Banach space. For a compact set $W \subset X$ we define the entropy numbers $\e_k(W,X)$:
$$
\e_k(W,X) :=   \inf \{\e : \exists y^1,\dots ,y^{2^k} \in X : W \subseteq \cup_{j=1}
^{2^k} B_X(y^j,\e)\}
$$
where $B_X(y,\e):=\{x\in X: \, \|x-y\|_X\le \e\}$.
 
With a help of Carl's inequality (\cite{C}, see also Section \ref{D}) we obtain from Proposition \ref{BP1} the following upper bounds for the entropy numbers
 \be\label{B6}
\e_m(\bH^r_p,L_\infty) \ll m^{-r}(\log m)^{r+1}.
\ee

Theorem 7.8.4 from \cite{VTbookMA} (p.374) states: For $d=2$, $1\le p\le \infty$, $r>1/p$ we have
\be\label{B7}
\e_m(\bH^r_p,L_\infty) \asymp m^{-r}(\log m)^{r+1}.
\ee

The following lemma is well known (see, for instance, \cite{VTbookMA}, Lemma 5.3.14, p.229).
\begin{Lemma}\label{BL1} Let $A$ be centrally symmetric compact in a separable Banach space $X$ and for two real numbers $r>0$ and $a \in \R$ we have
$$
d_m(A,X) \ll m^{-r}(\log m)^a
$$
and 
$$
\e_m(A,X) \gg  m^{-r}(\log m)^a.
$$
Then the following relations
$$
d_m(A,X) \asymp \e_m(A,X) \asymp  m^{-r}(\log m)^a
$$ 
hold.
\end{Lemma}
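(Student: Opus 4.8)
The plan is to peel the two $\asymp$ relations apart into four one-sided estimates. Two of them are handed to us: the upper bound $d_m(A,X)\ll m^{-r}(\log m)^a$ and the lower bound $\e_m(A,X)\gg m^{-r}(\log m)^a$. So the real work is to supply the matching \emph{upper} bound for the entropy numbers and the matching \emph{lower} bound for the Kolmogorov widths; once both are in place, chaining them with the given bounds produces $d_m(A,X)\asymp\e_m(A,X)\asymp m^{-r}(\log m)^a$.

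The entropy upper bound is the easy half, and I would get it from Carl's inequality in its weighted form — the very tool used in Section~\ref{D} to pass from (\ref{I1}) to (\ref{I2}). With the weight $w(k)=k^{r}(\log(k+1))^{-a}$ one has $\sup_{1\le k\le n}w(k)\e_k(A,X)\ll\sup_{1\le k\le n}w(k)d_k(A,X)$ for every $n$. The hypothesis $d_k(A,X)\ll k^{-r}(\log k)^a$ is exactly the statement that $w(k)d_k(A,X)$ stays bounded, so the right-hand side is $O(1)$ uniformly in $n$; taking $k=n$ on the left gives $w(n)\e_n(A,X)\ll 1$, that is $\e_n(A,X)\ll n^{-r}(\log n)^a$. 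Together with the assumed lower bound this yields $\e_m(A,X)\asymp m^{-r}(\log m)^a$, and since $d_m(A,X)\ll m^{-r}(\log m)^a\asymp\e_m(A,X)$ we also obtain $d_m(A,X)\ll\e_m(A,X)$.

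The remaining estimate, $d_m(A,X)\gg m^{-r}(\log m)^a$, is where I expect the real difficulty, and the obstacle is structural: Carl's inequality controls entropy numbers \emph{by} widths, so it cannot on its own turn the entropy lower bound into a width lower bound. A naive workaround — covering the near-optimal $m$-dimensional approximating subspace by $X$-balls of radius $\asymp d_m$ — costs a logarithmic factor in the number of balls and only delivers $d_m(A,X)\gg m^{-r}(\log m)^{a-r}$, which is short of the target by a power of the logarithm. The plan instead is to feed the now-sharp bound $\e_m(A,X)\gg m^{-r}(\log m)^a$ back into Carl's inequality on $[1,m]$, forcing $\max_{1\le k\le m}w(k)d_k(A,X)\gg 1$, and then to upgrade this statement about the running maximum into the pointwise bound $w(m)d_m(A,X)\gg 1$. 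For that upgrade I would lean on the two monotonicities that pull against each other — the widths $d_k(A,X)$ are non-increasing while $w(k)$ increases for large $k$ — together with the matching upper bound $w(k)d_k(A,X)\ll 1$, to rule out the weighted width collapsing at the index $m$ while its running maximum stays bounded away from zero. Controlling this interplay is precisely the step that needs care; it is where I would concentrate the argument, after which combining $d_m(A,X)\gg m^{-r}(\log m)^a$ with the assumed upper bound and with $\e_m(A,X)\asymp m^{-r}(\log m)^a$ closes the proof.
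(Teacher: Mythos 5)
First, a remark on the comparison itself: the paper does not prove this lemma but quotes it as known (citing \cite{VTbookMA}, Lemma 5.3.14), so your attempt has to be measured against the standard argument rather than against anything in the text. Half of your proposal is sound: the upper bound $\e_n(A,X)\ll n^{-r}(\log n)^a$ does follow from the hypothesis on the widths via Carl's inequality --- in fact the purely polynomial form (\ref{D1}) with exponent $r+1$ already suffices, since $n^{r+1}\e_n(A,X)\ll\max_{1\le k\le n}k^{r+1}d_{k-1}(A,X)\ll n(\log n)^a$ --- and together with the assumed lower bound this settles $\e_m(A,X)\asymp m^{-r}(\log m)^a$ and $d_m(A,X)\ll\e_m(A,X)$.

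The genuine gap is in the remaining bound $d_m(A,X)\gg m^{-r}(\log m)^a$, and the repair you sketch cannot close it. The three facts you propose to combine --- the running-max estimate $\max_{1\le k\le n}w(k)d_{k-1}(A,X)\gg1$ from Carl's inequality, the opposing monotonicities of $d_k$ and $w$, and the pointwise upper bound $w(k)d_k(A,X)\ll1$ --- simply do not imply the pointwise lower bound $w(m)d_m(A,X)\gg1$. Concretely, fix $m_0$ and consider the non-increasing sequence $d_k=k^{-r}(\log(k+2))^a$ for $k\le m_0$ and $d_k=d_{m_0}e^{-(k-m_0)}$ for $k>m_0$: it satisfies $w(k)d_k\ll1$ everywhere and $\max_{1\le k\le n}w(k)d_{k-1}\gg1$ for every $n$ (the maximum sits near $k=m_0$), yet $w(2m_0)d_{2m_0}$ is exponentially small. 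So no bookkeeping with those three facts alone can succeed; one must feed in the entropy lower bound at scales comparable to $m$ together with a covering estimate sharper than the one-step truncation you rightly discarded. The standard route is a dyadic multi-scale covering: take $2$-near-optimal subspaces $L_j$ of dimension $2^j$ for $j=0,\dots,J_0$ with $2^{J_0}\asymp m$, plus one of dimension $m$ realizing $d_m(A,X)$ up to a factor $2$; decompose each $f\in A$ into the increments $P_jf-P_{j-1}f$, whose norms are $\ll d_{2^{j-1}}(A,X)\ll2^{-jr}j^{a}$ by hypothesis and which live in dimension $\le2^{j+1}$; and cover the $j$-th increment set by $2^{n_j}$ balls with $n_j\asymp2^{j}\bigl((J_0-j)(r+1)+b\bigr)$. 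The total budget is $\sum_jn_j\le K(r,b)\,m$ --- this geometric summation is exactly where the spurious $\log m$ of the naive argument disappears --- and it yields $\e_{Km}(A,X)\le2d_m(A,X)+C2^{-b}m^{-r}(\log m)^a$. Taking $b$ large and invoking the assumed lower bound for $\e_{Km}(A,X)$ gives $d_m(A,X)\gg m^{-r}(\log m)^a$. This multi-scale covering, which consumes the width upper bounds at \emph{all} dyadic scales $2^j\le m$ and not only the monotonicity beyond $m$, is the idea missing from your outline.
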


Lemma \ref{BL1} and Proposition \ref{BP1} imply the following asymptotic behavior of the Kolmogorov widths.

\begin{Theorem}\label{BT2} Let $d=2$, $2\le p \le \infty$, $r>1/p$. Then, we have
\be\label{B12}
d_m(\bH^r_p,L_\infty) \asymp m^{-r}(\log m)^{r+1}.
\ee
\end{Theorem}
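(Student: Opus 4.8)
The plan is to prove Theorem~\ref{BT2} by combining the already-established upper bound on the Kolmogorov widths with a matching lower bound for the entropy numbers, and then invoking Lemma~\ref{BL1} to conclude that the two asymptotic quantities coincide. The upper bound is immediate: Proposition~\ref{BP1} (specialized to $d=2$, $2\le p\le\infty$, $r>1/p$) gives exactly
$$
d_m(\bH^r_p,L_\infty) \ll m^{-r}(\log m)^{r+1}.
$$
So the real content is to supply the reverse inequality, which by the structure of Lemma~\ref{BL1} I only need to produce \emph{for the entropy numbers}: namely $\e_m(\bH^r_p,L_\infty)\gg m^{-r}(\log m)^{r+1}$.

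For that lower bound I would appeal directly to the known asymptotic equivalence (\ref{B7}), i.e. Theorem 7.8.4 from \cite{VTbookMA}, which states that for $d=2$, $1\le p\le\infty$, $r>1/p$ one has $\e_m(\bH^r_p,L_\infty)\asymp m^{-r}(\log m)^{r+1}$. In particular this yields the lower estimate $\e_m(\bH^r_p,L_\infty)\gg m^{-r}(\log m)^{r+1}$ needed to feed into Lemma~\ref{BL1}. Since the class $\bH^r_p$ is centrally symmetric and compact in the separable Banach space $L_\infty$, the hypotheses of Lemma~\ref{BL1} are satisfied with $a=r+1$ and the given $r>0$.

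Putting these together, Lemma~\ref{BL1} gives $d_m(\bH^r_p,L_\infty)\asymp \e_m(\bH^r_p,L_\infty)\asymp m^{-r}(\log m)^{r+1}$, which is precisely (\ref{B12}). The mechanism by which Lemma~\ref{BL1} operates is worth noting: the missing lower bound for the Kolmogorov widths cannot in general be read off directly, but it is forced by the combination of the entropy lower bound with the width upper bound (roughly, an entropy lower bound that already matches the width upper bound leaves no room for the widths to decay faster). This is exactly the role Carl's inequality plays in the background of Lemma~\ref{BL1}.

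The only genuine subtlety -- and the step I would expect to be the main obstacle if one wanted a self-contained argument -- is the entropy lower bound $\e_m(\bH^r_p,L_\infty)\gg m^{-r}(\log m)^{r+1}$. Here, however, I am entitled to quote it as the known result (\ref{B7}), so the proof reduces to a citation plus an application of Lemma~\ref{BL1}. Were (\ref{B7}) not available, one would have to establish the lower bound from scratch, typically via a volume/packing argument exhibiting a suitable well-separated family of hyperbolic-cross polynomials inside $\bH^r_p$ of the correct cardinality; that construction is the technically demanding part, but it is precisely what \cite{VTbookMA} provides, so I would not reprove it here.
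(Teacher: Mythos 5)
Your proposal is correct and follows exactly the paper's own route: the upper bound from Proposition \ref{BP1}, the entropy lower bound quoted from (\ref{B7}), and Lemma \ref{BL1} to force the matching lower bound for the Kolmogorov widths. Nothing is missing.
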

Theorem \ref{BT2} provides the right order of the Kolmogorov widths $d_m(\bH^r_p,L_\infty)$
for all $2\le p \le \infty$ and $r>1/p$ in the two-dimensional case $d=2$. We do not know the right orders 
of $d_m(\bH^r_p,L_\infty)$ and $d_m(\bW^r_p,L_\infty)$ in the case $d\ge 3$. We also do not know 
the right orders of $d_m(\bW^r_p,L_\infty)$ in the case $d=2$, $2\le p\le \infty$ for small smoothness $r\le 1/2$. Note that in the case of large smoothness the right order is known. Theorem 5.3.18 on page 231 of \cite{VTbookMA} states: In the case $d=2$ we have, for $2\le p\le \infty$ and $r>1/2$,
$$
d_m(\bW^r_p,L_\infty) \asymp m^{-r}(\log m)^{r+1/2}.
$$

\section{Some lower bounds and applications}
\label{C}

We formulate a known result, which relates optimal error of numerical integration of a class with its Kolmogorov width. For a compact subset $\bF\subset \C(\Omega)$ define the best error of numerical integration with $m$ knots as follows
$$
\kappa_m(\bF) := \inf_{\xi^1,\dots,\xi^m;\lambda_1,\dots,\lambda_m} \sup_{f\in\bF}\left|\int_\Omega fd\mu - \sum_{j=1}^m \lambda_j f(\xi^j)\right|.
$$
The following inequality was proved in \cite{No} (see also \cite{NoLN})
\be\label{C1}
\kappa_m(\bF) \le 2d_m(\bF,L_\infty).
\ee
We use inequality (\ref{C1}) for obtaining some lower bounds for the Kolmogorov widths from the known lower bounds for numerical integration. 

In the case of the $\bW$ classes the following result is known (see, \cite{VT43} and \cite{VTbookMA}, p.264, Theorem 6.4.3): For $r>1/p$ we have
\be\label{C2}
\kappa_m(\bW^r_p)\gg m^{-r}(\log m)^{(d-1)/2},\quad 1\le p<\infty.
\ee
Combining (\ref{C2}) and (\ref{C1}) with Theorem \ref{AT1} we obtain the following relation for $2<p<\infty$ and $1/p<r<1/2$
$$
m^{-r}(\log m)^{(d-1)/2} \ll \kappa_m(\bW^r_p)\le 2d_m(\bW^r_p,L_\infty) \ll m^{-r} (\log m)^{(d-2)(1-r)+1}.
$$
This shows that the power decay of both the $\kappa_m$ and the $d_m$ is of order $m^{-r}$ and the exponents of the logarithmic factors 
differ by $(d-2)(1/2-r)+1/2$, which grows with $d$. 

Combining (\ref{C2}) and (\ref{C1}) with Theorem \ref{AT2} we obtain the following relation for $2<p<\infty$ and $r=1/2$
$$
m^{-r}(\log m)^{(d-1)/2} \ll \kappa_m(\bW^{1/2}_p) \le 2d_m(\bW^{1/2}_p,L_\infty)
$$
$$
 \ll m^{-1/2} (\log m)^{d/2}(\log \log m)^{3/2}.
$$
This shows that the power decay is of order $m^{-r}$ and the exponents of the logarithmic factors 
differ by $1/2$, which does not grow with $d$. 

We note that there are known bounds for the $\kappa_m(\bW^r_p)$, which are better than the above bounds. Namely, in the case $2<p\le \infty$ and $1/p<r<1/2$ we have (see \cite{VTbookMA}, p.276, Theorem 6.5.5 for $d=2$ and \cite{DTU}, p.138, Theorem 8.5.6 for all $d$)
$$
\kappa_m(\bW^r_p) \ll m^{-r} (\log m)^{(1-r)(d-1)}
$$
and in the case $r=1/2$ (see \cite{VTbookMA}, p.282, Theorem 6.5.9 for $d=2$ and \cite{DTU}, p.138, Theorem 8.5.6 for all $d$)
$$
\kappa_m(\bW^{1/2}_p) \ll m^{-1/2} (\log m)^{(d-1)/2}(\log\log m)^{1/2}.
$$

In the case of the $\bH$ classes the following result is known (see, for instance, \cite{DTU}, p.134, Theorem 8.5.1): For $1\le p\le \infty$ and $r>1/p$ we have
\be\label{C2h}
\kappa_m(\bH^r_p)\asymp m^{-r}(\log m)^{d-1}.
\ee
Combining (\ref{C2h}) and (\ref{C1}) with Theorem \ref{AT1h} we obtain the following relation for $2<p\le\infty$ and $1/p<r<1/2$
$$
m^{-r}(\log m)^{d-1} \ll \kappa_m(\bH^r_p)\le 2d_m(\bH^r_p,L_\infty) \ll m^{-r} (\log m)^{d-1+r}.
$$
This shows that the power decay is of order $m^{-r}$ and the exponents of the logarithmic factors 
differ by $r$, which does not grow with $d$. 

Combining (\ref{C2h}) and (\ref{C1}) with Theorem \ref{AT2h} we obtain the following relation for $2<p\le\infty$ and $r=1/2$
$$
m^{-1/2}(\log m)^{d-1} \ll \kappa_m(\bH^{1/2}_p)\le 2d_m(\bH^{1/2}_p,L_\infty)
$$
$$
 \ll m^{-1/2} (\log m)^{d-1/2}(\log \log m)^{3/2}.
$$
This shows that the power decay is of order $m^{-1/2}$ and the exponents of the logarithmic factors 
differ by $1/2$, which does not grow with $d$. 


We now discuss application of our results to the problem of optimal sampling recovery. Recall the setting 
 of the optimal recovery. Let $\Omega$ be a compact subset of $\R^d$ with a probability measure $\mu$ on it. For a fixed $m$ and a set of points  $\xi:=\{\xi^j\}_{j=1}^m\subset \Omega$, let $\Phi_\xi $ be a linear operator from $\bbC^m$ into $L_q(\Omega,\mu)$. With a little abuse of notation set $L_\infty(\Omega) =\C(\Omega)$ to be the space of functions continuous on $\Omega$. 
Denote for a class $\bF$ (usually, centrally symmetric and compact subset of $L_q(\Omega,\mu)$)
$$
\varrho_m(\bF,L_q) := \inf_{\text{linear}\, \Phi_\xi; \,\xi} \sup_{f\in \bF} \|f-\Phi_\xi(f(\xi^1),\dots,f(\xi^m))\|_q.
$$

The following result was recently obtained in \cite{VT183}. 
\begin{Theorem}\label{BT1} Let $\bF$ be a compact subset of $\C(\Omega)$. There exist two positive absolute constants $b$ and $B$ such that
$$
\ro_{bn}(\bF,L_2) \le Bd_n(\bF,L_\infty).
$$
\end{Theorem}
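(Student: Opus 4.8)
The plan is to extract a single good $n$-dimensional subspace from the Kolmogorov width, equip that subspace with a sampling-based projection coming from an $L_2$ discretization theorem, and then split the recovery error into an approximation part (controlled by the width) and a projection part (controlled by the discretization).

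First I would fix a subspace $U_n \subset \C(\Omega)$ with $\dim U_n \le n$ that is near-optimal for the width, so that for every $f \in \bF$ there is $u_f \in U_n$ with $\|f-u_f\|_\infty \le 2\,d_n(\bF,L_\infty)$. Since $\mu$ is a probability measure we have $\|g\|_2 \le \|g\|_\infty$ for every $g$, and in particular $\|f-u_f\|_2 \le 2\,d_n(\bF,L_\infty)$. The key external ingredient is a sampling discretization theorem for the $L_2$ norm on finite-dimensional subspaces: there is an absolute constant $b$ so that for the $n$-dimensional $U_n$ one can find $m \le bn$ points $\xi^1,\dots,\xi^m$ and nonnegative weights $w_1,\dots,w_m$ with $\sum_j w_j \le C_0$ and
$$
\tfrac12\|u\|_2^2 \le \sum_{j=1}^m w_j|u(\xi^j)|^2 \le \tfrac32\|u\|_2^2,\qquad u\in U_n.
$$
I would then take $\Phi_\xi$ to be the weighted least-squares operator onto $U_n$: given data, $\Phi_\xi(g(\xi^1),\dots,g(\xi^m))$ is the element of $U_n$ minimizing $\sum_j w_j|g(\xi^j)-u(\xi^j)|^2$. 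The lower frame bound makes the discrete inner product $\langle g,h\rangle_w := \sum_j w_j g(\xi^j)\overline{h(\xi^j)}$ nondegenerate on $U_n$, so $\Phi_\xi$ is a well-defined linear operator that reproduces every $u\in U_n$, and, being the orthogonal projection for $\langle\cdot,\cdot\rangle_w$, it is a contraction in the seminorm $\|g\|_w^2 := \sum_j w_j|g(\xi^j)|^2$ (the defining orthogonality of the least-squares residual gives the Pythagorean identity $\|g\|_w^2 = \|\Phi_\xi g\|_w^2 + \|g-\Phi_\xi g\|_w^2$).

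With these pieces the estimate is routine bookkeeping. For $f\in\bF$, writing $g:=f-u_f$ and using $\Phi_\xi u_f = u_f$, the triangle inequality gives
$$
\|f-\Phi_\xi(f(\xi))\|_2 \le \|f-u_f\|_2 + \|\Phi_\xi(g(\xi))\|_2.
$$
For the second term the lower frame bound yields $\|\Phi_\xi g\|_2 \le \sqrt2\,\|\Phi_\xi g\|_w$, the contraction property yields $\|\Phi_\xi g\|_w \le \|g\|_w$, and finally $\|g\|_w^2 = \sum_j w_j|g(\xi^j)|^2 \le \big(\sum_j w_j\big)\|g\|_\infty^2 \le C_0\|f-u_f\|_\infty^2$. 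Hence $\|\Phi_\xi(g(\xi))\|_2 \le \sqrt{2C_0}\,\|f-u_f\|_\infty \le 2\sqrt{2C_0}\,d_n(\bF,L_\infty)$, and together with $\|f-u_f\|_2 \le 2\,d_n(\bF,L_\infty)$ this gives $\varrho_{bn}(\bF,L_2)\le B\,d_n(\bF,L_\infty)$ with $B$ an absolute constant.

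I expect the main obstacle to be the sampling discretization input: proving that an arbitrary $n$-dimensional subspace of $\C(\Omega)$ admits a weighted $L_2$ Marcinkiewicz discretization with only $O(n)$ nodes and bounded total weight. This is the deep step and relies on subsampling/sparsification results of Batson--Spielman--Srivastava type together with their adaptation to function spaces; everything else above is the elementary triangle-inequality argument. A secondary point requiring care is that $\sum_j w_j$ must be bounded independently of the subspace $U_n$, since this is precisely what converts the pointwise bound on the samples of $f-u_f$ into the $L_\infty$ Kolmogorov width $d_n(\bF,L_\infty)$.
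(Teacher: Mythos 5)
Your argument is correct and is essentially the proof from \cite{VT183}: the paper itself states Theorem \ref{BT1} without proof, cites that reference, and then describes exactly the algorithm you construct, namely the weighted least squares operator $\ell 2\bw(\xi,X_N)$ with discretization (Christoffel-function) weights. The one deep input --- an $L_2$ Marcinkiewicz-type discretization of an arbitrary $n$-dimensional subspace of $\C(\Omega)$ with $O(n)$ nodes and bounded total weight $\sum_j w_j$ --- is correctly isolated by you as the external ingredient, and it is indeed supplied by the subsampling results of Batson--Spielman--Srivastava type in the form used in \cite{VT183} and \cite{LT}; the rest of your bookkeeping (near-optimal subspace, reproduction of $U_n$, contraction in the discrete seminorm, lower frame bound, $\|g\|_w^2\le C_0\|g\|_\infty^2$) is exactly right.
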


This theorem combined with the upper bounds for the Kolmogorov widths obtained in Section \ref{Ac}
gives the following bounds for the sampling recovery: Let $2<p\le\infty$ and $1/p<r<1/2$ then
\be\label{C3}
 \ro_m(\bW^r_p,L_2) \ll m^{-r} (\log m)^{(d-2)(1-r)+1}
\ee
and
\be\label{C3h}
\ro_m(\bH^r_p,L_2) \ll m^{-r} (\log m)^{d-1+r}.
\ee
In the case $r=1/2$ we obtain
\be\label{C4}
\ro_m(\bW^{1/2}_p,L_2) \ll m^{-1/2} (\log m)^{d/2}(\log \log m)^{3/2}
\ee
and
\be\label{C4h}
\ro_m(\bH^{1/2}_p,L_2) \ll m^{-1/2} (\log m)^{d-1/2}(\log \log m)^{3/2}.
\ee

Theorem \ref{BT1} was proved with a help of a classical type of algorithm -- weighted least squares. Let $X_N$ be an $N$-dimensional subspace of the space of continuous functions $\C(\Omega)$ and let $\bw:=(w_1,\dots,w_m)\in \R^m$ be a positive weight, i.e. $w_i>0$, $i=1,\dots,m$. Consider the following classical weighted least squares recovery operator (algorithm) (see, for instance, \cite{CM})
$$
 \ell 2\bw(\xi,X_N)(f):=\text{arg}\min_{u\in X_N} \|S(f-u,\xi)\|_{2,\bw},\quad \xi=\{\xi^j\}_{j=1}^m\subset \Omega,
$$
where
$$
\|S(g,\xi)\|_{2,\bw}:= \left(\sum_{\nu=1}^m w_\nu |g(\xi^\nu)|^2\right)^{1/2} .
$$
However, the proof of the upper bounds in (\ref{C3}) -- (\ref{C4h}) is not constructive. We are not aware of a constructive proof of the upper bounds of the Kolmogorov widths for the classes $\bW^r_p$ and $\bH^r_p$ in Theorems \ref{AT1}, \ref{AT1h}, \ref{AT2}, and \ref{AT2h}. More specifically, we do not know a good subspace, which provides approximation close to the Kolmogorov width. We note that for a given subspace $X_N$ we have an explicit way of calculating the weights of the weighted least squares algorithm. It is based on the Christoffel function of $X_N$ (see \cite{DPSTT2}, proof of Theorem 6.3, \cite{LT}, Remark 3.1, and \cite{NSU}).

There are constructive methods for the sampling recovery based on sparse grids (Smolyak point sets $SG(n)$) (see \cite{DTU}, Chapter 5 and \cite{VTbookMA}, Section 6.9). For instance, these methods give the following upper bounds for the sampling recovery for all $r>1/p$, $2\le p\le \infty$ (see  \cite{VTbookMA}, p.307, Theorem 6.9.2).
\be\label{C5}
\ro_m(\bH^{r}_p,L_2) \ll m^{-r} (\log m)^{(d-1)(1+r)} .
\ee
Clearly, for $d>2$ bound (\ref{C5}) is not as good as bounds (\ref{C3h}) and (\ref{C4h}). However, it is known that 
bound (\ref{C5}) cannot be improved by methods based on sparse grids or more generally based on $(n,l)$-nets (see below). We discuss this interesting phenomenon in detail. First of all, let us make a simple well known observation on a relation between sampling recovery and numerical integration. Associate with the recovery operator 
$$
\Psi(f,\xi):=\sum_{j=1}^m f(\xi^j)\psi_j(\bx)
$$
 the cubature formula 
 $$
 \La_m(f,\xi) := \sum_{j=1}^m \la_j f(\xi^j),\quad \la_j:= \int_\Omega \psi_j(\bx)d\mu
$$
with knots $\xi =\{\xi^j\}_{j=1}^m$ and weights $\La=\{\la_j\}_{j=1}^m$. Then for a normalized (probabilistic) measure $\mu$ we have
$$
\left|\La_m(f,\xi)-\int_\Omega f(\bx)d\mu\right| = \left|\int_\Omega (\Psi(f,\xi)-f)d\mu \right|
$$
\be\label{ni<sr}
\le \|\Psi(f,\xi)-f\|_1 \le \|\Psi(f,\xi)-f\|_q,\qquad q\ge 1.
\ee
We now present some known results on the lower bounds for the numerical integration  with respect to a special class of knots. Let $\bs = (s_1,\dots,s_d)$, $s_j\in \N_0$, $j=1,\dots,d$. We associate with $\bs$ a web
$W(\bs)$ as follows: denote 
$$
w(\bs,\bx) := \prod_{j=1}^d \sin (2^{s_j}x_j)
$$
and define
$$
W(\bs) := \{\bx: w(\bs,\bx)=0\}.
$$
\begin{Definition}\label{CD1} We say that a set of knots $\xi:=\{\xi^i\}_{i=1}^m$ is an $(n,l)$-net if $|\xi\setminus W(\bs)| \le 2^l$ for all $\bs$ such that $\|\bs\|_1=n$.
\end{Definition}

It is clear that the bigger the parameter $l$ the larger the set of $(n,l)$-nets.

\begin{Definition}\label{CD2} For $n\in\N$ we define the sparse grid $SG(n)$ as follows
$$
SG(n) := \{\xi(\bn,\bk) = (2\pi k_12^{-n_1},\dots,2\pi k_d 2^{-n_d}),
$$
$$
 0\le k_j<2^{n_j}, j=1,\dots,d,\quad \|\bn\|_1=n\}.
$$
\end{Definition}
Then it is easy to check that $SG(n)\subset W(\bs)$ with any $\bs$ such that $\|\bs\|_1=n$. This means that $SG(n)$ is an $(n,l)$-net for any $l$. Also a union of the set $SG(n)$ with any set consisting of $2^l$ points is  an $(n,l)$-net. 

For convenience, let us denote by $\ro_m^{n}$ the optimal error of sampling recovery algorithms, which use the $(n,n-1)$-nets of cardinality $m$ and by $\ro_m^{sg}$ the optimal error of sampling recovery algorithms, which use the sparse grids $SG(n)$ with cardinality $m=|SG(n)|$. In both of these cases we can take $m\asymp 2^n n^{d-1}$.

Let us begin our discussion with the $\bH$ classes. It was demonstrated in \cite{VT150} that the example constructed in \cite{VT43} for proving the lower bound (\ref{C2}) shows that the upper bound (\ref{C5}) cannot be improved if we use a special class of point sets -- $(n,l)$-nets:
For any cubature formula $\Lambda_m(\cdot,\xi)$ with respect to a $(n,n-1)$-net $\xi$ we have
$$
\sup_{f\in \bH_{p}^r}\left|\Lambda_m(f,\xi)-\int_{\T^d}f(\bx)d\bx\right| \gg 2^{-rn}n^{d-1},\quad 1\le p\le\infty.
$$
This lower bound and inequality (\ref{ni<sr}) with $q=2$ imply
\be\label{C5l}
\ro_m^{sg}(\bH^{r}_p,L_2)\ge \ro_m^{n}(\bH^{r}_p,L_2) \gg m^{-r} (\log m)^{(d-1)(1+r)} .
\ee
This inequality shows that if we use the sparse grids set $SG(n)$ of points for recovery then we cannot get a better error than in  (\ref{C5l}). Moreover, this inequality shows that even if we use the sparse grids set $SG(n)$ of points combined with any set of $2^{n-1}$ poins for recovery then we still cannot get a better error than in  (\ref{C5l}). 
The fact that (\ref{C5}) cannot be improved for the sparse grids was proved in \cite{DU}. For further discussion we refer the reader to \cite{DTU}, Ch. 5. Comparing inequalities (\ref{C5l}) and inequalities 
(\ref{C3h}) and (\ref{C4h}), we conclude that in the range of parameters $1/p<r\le 1/2$, $2< p<\infty$, 
$d>2$, there exists a weighted least squares algorithm, which provides better (albeit, nonconstructive) upper bounds for sampling recovery than algorithms based on sparse grids or even based on a wider class of point sets -- the $(n,n-1)$-nets.

The sampling recovery of the $\bW$ classes turns out to be a more difficult problem than the sampling recovery of the $\bH$ classes.  The following result is from \cite{DU} (sparse grids) and from  \cite{VT150} ($(n,l)$-nets):
For any cubature formula $\Lambda_m(\cdot,\xi)$ with respect to a $(n,n-1)$-net $\xi$, in particular with respect to the $SG(n)$ set, we have
$$
\sup_{f\in \bW_{p}^r}\left|\Lambda_m(f,\xi)-\int_{\T^d}f(\bx)d\bx\right| \gg 2^{-rn}n^{(d-1)/2},\quad 1\le p<\infty.
$$
This lower bound and inequality (\ref{ni<sr}) imply
\be\label{CWl}
\ro_m^{sg}(\bW^{r}_p,L_2)\ge \ro_m^{n}(\bW^{r}_p,L_2) \gg m^{-r} (\log m)^{(d-1)(1/2+r)} .
\ee
Comparing inequalities (\ref{CWl}) and inequalities 
(\ref{C3}) and (\ref{C4}), we conclude that in the range of parameters $1/4<r\le 1/2$, $2< p<\infty$, for large enough $d$
there exists a weighted least squares algorithm, which provides better (albeit, nonconstructive) upper bounds for sampling recovery than algorithms based on sparse grids.

The reader can find recent results on optimal sampling recovery in the papers \cite{KU}, \cite{NSU}, \cite{VT183}, and \cite{KU2}.

Discussions in Sections \ref{B} and \ref{C} show that we have made some progress in obtaining the upper bounds for the Kolmogorov widths of classes with small mixed smoothness but the right orders of 
them are still not established. We formulate two open problems in this regard.

{\bf Open problem 1.} Find the right orders of decay of $d_m(\bW^r_p,L_\infty)$ in the case $d\ge 2$,
$2< p\le\infty$, $1/p<r\le 1/2$.

 {\bf Open problem 2.} Find the right orders of decay of $d_m(\bH^r_p,L_\infty)$ in the case $d\ge 3$,
$2< p\le\infty$, $1/p<r\le 1/2$.

Note that Theorem \ref{BT2} gives the right orders of decay of $d_m(\bH^r_p,L_\infty)$ in the case $d=2$, and $r>1/p$, which means that for $\bH$ classes the problem is solved in dimension $d=2$.

\section{Discussion}
\label{D}

There are several general results, which give 
lower estimates
on the Kolmogorov widths $d_n(F,X)$ in terms of the entropy numbers
$\e_k(F,X)$.  Carl's (see \cite{C} and \cite{VTbook}, p.169, Theorem 3.23)
inequality states: For any $r>0$ we have
\begin{equation}\label{D1}
\max_{1\le k \le n} k^r \e_k(F,X) \le C(r) \max _{1\le m \le n} m^r d_{m-1}(F,X).
\end{equation}
Inequality (\ref{D1}) and Lemma \ref{AL1} imply
\begin{Lemma}\label{DL1} Let $2\le p<\infty$. We have
\be\label{D2}
\e_k(\Tr(Q_n)_p,L_\infty) \ll (|Q_n|/k)^{1/p}n^{(d-1)(1-2/p)+1/p}.
\ee
\end{Lemma}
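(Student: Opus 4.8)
The plan is to obtain (\ref{D2}) directly from Carl's inequality (\ref{D1}) and the Kolmogorov width bound (\ref{A2'}) of Lemma \ref{AL1}, the crucial point being to choose the exponent in Carl's inequality so that the transfer from widths to entropy numbers is lossless.

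First I would fix $n$ and apply (\ref{D1}) to the set $F=\Tr(Q_n)_p$ in the space $X=L_\infty$, with the exponent $r=1/p$ and with the generic upper index in (\ref{D1}) taken to be $k$ (I write $k$ there to avoid a clash with the hyperbolic-cross level $n$). Since $k^{1/p}\e_k(F,X)\le \max_{1\le j\le k} j^{1/p}\e_j(F,X)$, inequality (\ref{D1}) yields
\[
k^{1/p}\e_k(\Tr(Q_n)_p,L_\infty)\le C(1/p)\max_{1\le m\le k} m^{1/p}\, d_{m-1}(\Tr(Q_n)_p,L_\infty).
\]
Thus the whole problem reduces to a uniform (in $k$) estimate of $m^{1/p} d_{m-1}$.

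Next I would substitute the bound (\ref{A2'}), which gives $d_{m-1}(\Tr(Q_n)_p,L_\infty)\ll (2^n/\overline{m-1})^{1/p}\, n^{(d-1)(1-1/p)+1/p}$, where $\overline{m-1}=\max(m-1,1)$ (for $m=1$ this is just the diameter bound, finite by (\ref{N})). The entire reason for the choice $r=1/p$ is that the factor $m^{1/p}$ exactly compensates $(\overline{m-1})^{-1/p}$: for $m\ge 2$ one has $(m/(m-1))^{1/p}\le 2$, and the term $m=1$ is absorbed by the convention $\overline{m-1}=1$. Hence $\max_{1\le m\le k} m^{1/p} d_{m-1}\ll (2^n)^{1/p}\, n^{(d-1)(1-1/p)+1/p}$ uniformly in $k$, and dividing by $k^{1/p}$ gives $\e_k(\Tr(Q_n)_p,L_\infty)\ll (2^n/k)^{1/p}\, n^{(d-1)(1-1/p)+1/p}$.

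Finally I would rewrite this in the stated form. Using $|Q_n|\asymp 2^n n^{d-1}$ we have $(2^n/k)^{1/p}=(|Q_n|/k)^{1/p}\, n^{-(d-1)/p}$, and collecting the powers of $n$ via $-(d-1)/p+(d-1)(1-1/p)+1/p=(d-1)(1-2/p)+1/p$ produces exactly $(|Q_n|/k)^{1/p}\, n^{(d-1)(1-2/p)+1/p}$, which is (\ref{D2}). The only delicate point in the argument is the calibration $r=1/p$ in Carl's inequality: any other exponent would leave an uncompensated power of $m$ inside the maximum and would either degrade the bound or fail to produce a bound uniform in $k$. Everything else is routine bookkeeping of the powers of $2^n$, $n$, and $k$.
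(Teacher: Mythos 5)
Your proof is correct and follows essentially the same route as the paper, which derives Lemma \ref{DL1} precisely from Carl's inequality (\ref{D1}) combined with Lemma \ref{AL1}; your calibration $r=1/p$ and the bookkeeping via $|Q_n|\asymp 2^n n^{d-1}$ are exactly the (omitted) details behind the paper's one-line deduction.
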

Bound (\ref{D2}) is known. It was obtained in \cite{VT180} to prove the Marcinkiewicz type discretization theorems for the hyperbolic cross polynomials. For the reader's convenience we describe these results here. 

{\bf The Marcinkiewicz discretization problem.} Let $\Omega$ be a compact subset of $\R^d$ with the probability measure $\mu$. We say that a linear subspace $X_N$ (index $N$ here, usually, stands for the dimension of $X_N$) of $L_q(\Omega)$, $1\le q < \infty$, admits the Marcinkiewicz type discretization theorem with parameters $m\in \N$ and $q$ and positive constants $C_1\le C_2$ if there exist a set 
$$
\Big\{\xi^j \in \Omega: j=1,\dots,m\Big\}
$$ 
 such that for any $f\in X_N$ we have
\be\label{D3}
C_1\|f\|_q^q \le \frac{1}{m} \sum_{j=1}^m |f(\xi^j)|^q \le C_2\|f\|_q^q.
\ee
In the case $q=\infty$ we define $L_\infty$ as the space of continuous functions on $\Omega$  and ask for
\be\label{D4}
C_1\|f\|_\infty \le \max_{1\le j\le m} |f(\xi^j)| \le  \|f\|_\infty.
\ee
We will also use the following brief way to express the above properties: The $\cM(m,q)$ (more precisely the $\cM(m,q,C_1,C_2)$) theorem holds for  a subspace $X_N$, written $X_N \in \cM(m,q)$ (more precisely $X_N \in \cM(m,q,C_1,C_2)$).

In \cite{VT180} bound (\ref{D2}) was derived from the following general result (see Lemma 3.2 there).
Let $X_N$ be an $N$-dimensional subspace of $\C(\Omega)$. Denote by $X_N^q$ the unit $L_q$-ball of the $X_N$. 

\begin{Lemma}\label{DL2} Let $q\in (2,\infty)$. Assume that for any $f\in X_N$ we have
\be\label{D5}
\|f\|_\infty \le M\|f\|_q
\ee
with some constant $M$. Also, assume that $X_N \in \cM(s,\infty,C_1)$ with $s\le aN^c$.
Then for $k\in [1,N]$ we have  
\be\label{D6}
\e_k(X_N^q, L_\infty) \le C(q,a,c,C_1)  M \left(\frac{\log N}{k}\right)^{1/q} .
\ee
\end{Lemma}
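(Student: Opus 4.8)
The plan is to split the argument into an elementary discretization step, which replaces the continuous norm $L_\infty$ by the finite-dimensional norm $\ell_\infty^s$, followed by a single functional-analytic entropy estimate that is responsible for the exponent $1/q$.

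First I would exploit the hypothesis $X_N\in\cM(s,\infty,C_1)$. Writing $S:X_N\to\ell_\infty^s$ for the sampling map $Sf:=(f(\xi^1),\dots,f(\xi^s))$, the discretization inequalities $C_1\|f\|_\infty\le\|Sf\|_{\ell_\infty^s}\le\|f\|_\infty$ show that $S$ is injective and bi-Lipschitz from $(X_N,\|\cdot\|_\infty)$ onto its image $Y:=S(X_N)$, with $\dim Y=N$. Since the two metrics are equivalent with constants $C_1$ and $1$, an $\e$-net for $W:=S(X_N^q)$ in $\ell_\infty^s$ transports (up to the factor $C_1^{-1}$ and a harmless factor from allowing free centers) to a net for $X_N^q$ in $L_\infty$, giving $\e_k(X_N^q,L_\infty)\ll C_1^{-1}\e_k(W,\ell_\infty^s)$. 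By the assumed Nikol'skii inequality $\|f\|_\infty\le M\|f\|_q$ the set $W$ lies in the ball $M B_\infty^s$; equivalently, regarding $X_N$ with its $L_q$ norm, the operator $T:=S:(X_N,\|\cdot\|_q)\to\ell_\infty^s$ has norm $\|T\|\le M$ and $W=T(X_N^q)$. Everything thus reduces to bounding $\e_k(T)$ for $k\in[1,N]$.

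The heart of the matter is the estimate $\e_k(T)\ll M(\log s/k)^{1/q}$, and this is where the exponent $1/q$ must appear. A crude norm comparison is useless: bounding $\|Tf\|_{\ell_q^s}\le s^{1/q}\|Tf\|_\infty$ merely recovers the $\ell_\infty^s$ ball and loses all $L_q$ structure. Instead one must use that the domain is a subspace of an $L_q$ space, hence of cotype $q$, or dually that $T^\ast:\ell_1^s\to X_N^\ast$ takes values in the type-$q'$ space $X_N^\ast$ (a quotient of $L_{q'}$, $q'=q/(q-1)$). The deep input is Maurey's empirical-method entropy bound: for $u:\ell_1^s\to F$ with $F$ of type $q'$ one has $\e_k(u)\ll T_{q'}(F)\,\|u\|\,(\log(s+1)/k)^{1-1/q'}$, and $1-1/q'=1/q$. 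Applying this to $u=T^\ast$ and combining it with the duality of entropy numbers for the finite-dimensional pair $\ell_\infty^s/\ell_1^s$ yields $\e_k(T)\ll M(\log s/k)^{1/q}$. For $1\le k\le\log s$ the target bound is automatic, since $\e_k(T)\le\|T\|\le M\le M(\log s/k)^{1/q}$, so only the range $\log s\lesssim k\le N$ actually uses the empirical estimate.

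Finally I would invoke $s\le aN^c$: this gives $\log s\le\log a+c\log N\ll\log N$, whence $(\log s/k)^{1/q}\ll(\log N/k)^{1/q}$ with a constant depending only on $a,c,q$, and chaining the three bounds delivers the claim with a constant $C(q,a,c,C_1)$. The genuine obstacle is the middle step: transporting the $L_q$ (cotype-$q$) structure to the discrete model and extracting the exponent $1/q$, which rests on Maurey's empirical method together with entropy duality — precisely the \emph{deep results from functional analysis} referred to in the introduction. The discretization reduction and the logarithmic bookkeeping are routine by comparison.
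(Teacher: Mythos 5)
The paper does not actually prove Lemma~\ref{DL2}: it imports it verbatim from \cite{VT180} (Lemma 3.2 there) and only records that the proof rests on \cite{VT180}, Lemma 3.1, \cite{Kos}, Corollary 4.2, and \cite{Tal}, Lemma 16.5.4. So your attempt can only be compared against that cited route. Your architecture is the right one and agrees with it in the elementary part: the hypothesis $X_N\in\cM(s,\infty,C_1)$ is used exactly to replace $L_\infty$ by $\ell_\infty^s$ (your reduction $\e_k(X_N^q,L_\infty)\le 2C_1^{-1}\e_k(S(X_N^q),\ell_\infty^s)$ is correct after recentering the net inside the image), the Nikol'skii hypothesis gives $\|S:(X_N,\|\cdot\|_q)\to\ell_\infty^s\|\le M$, the condition $s\le aN^c$ converts $\log s$ into $C(a,c)\log N$, and the exponent $1/q=1-1/q'$ does come from the type $q'$ of the dual side. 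This is indeed where the ``deep'' input sits.

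The one step that is not sound as written is the duality step. ``Duality of entropy numbers for the finite-dimensional pair $\ell_\infty^s/\ell_1^s$'' is not a theorem: in finite dimensions $\e_k(u)$ and $\e_k(u^*)$ are comparable only with dimension-dependent constants, which is useless here, and the general duality problem for entropy numbers is open. What rescues your argument is that $(X_N,\|\cdot\|_q)$ is a subspace of $L_q$ with $2<q<\infty$, hence $K$-convex with constant depending only on $q$; the Bourgain--Pajor--Szarek--Tomczak-Jaegermann duality theorem then gives $\e_k(T)\le b\,\e_{[k/a]}(T^*)$ with $a,b=C(q)$, and combined with the Carl--Maurey bound for $T^*:\ell_1^s\to X_N^*$ (which has type $q'$, type being inherited by quotients of $L_{q'}$) this yields $\e_k(T)\ll M\bigl(\log(1+s)/k\bigr)^{1/q}$ as needed. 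You must invoke that theorem and the $K$-convexity explicitly; alternatively you can bypass duality altogether by using the direct chaining bound for evaluation operators into $\ell_\infty^s$ from a space whose dual has type $q'$, which is precisely the Talagrand/Kosov input the paper points to. With that repair the proof closes; I would also record Maurey's estimate in the sharper form $\e_k(u)\ll T_{q'}(F)\|u\|\bigl(\log(1+s/k)/k\bigr)^{1/q}$, although the weaker $\log(1+s)$ version you quote already suffices for the stated bound since $k\le N\le s\le aN^c$.
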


Note that Lemma \ref{DL2} is based on deep results from functional analysis (see \cite{VT180}, Lemma 3.1; \cite{Kos}, Corollary 4.2;  
  \cite{Tal}, p.552, Lemma 16.5.4). 
Lemma \ref{DL1} follows from Lemma \ref{DL2} and
 known Nikol'skii inequality for $\Tr(Q_n)$ (see \cite{Tmon}): For any $f\in \Tr(Q_n)$ we have 
$$
\|f\|_\infty \ll  2^{n/q} n^{(d-1)(1-1/q)}\|f\|_q.
$$
In this paper we gave other proof of Lemma \ref{DL1}, which is simpler and more elementary than the mentioned above known proof. 
We stress that for applications in sampling discretization of integral norms it is important that Lemma \ref{DL1} has form (\ref{D2}). Indeed, the following general conditional result is used for such applications.  The following Theorem \ref{DT1} in case $q=1$ was proved in  \cite{VT159} and it was extended to the case 
$q\in (1,\infty)$ in \cite{DPSTT1}. 

\begin{Theorem}\label{DT1} Let $1\le q<\infty$. Suppose that a subspace $X_N$ satisfies the condition
\be\label{D7}
\e_k(X^q_N,L_\infty) \le  B (N/k)^{1/q}, \quad 1\leq k\le N,
\ee
where $B\ge 1$.
Then for large enough constant $C(q)$ there exists a set of
$$
m \le C(q)NB^{q}(\log_2(2BN))^2
$$
 points $\xi^j\in \Omega$, $j=1,\dots,m$,   such that for any $f\in X_N$
we have
$$
\frac{1}{2}\|f\|_q^q \le \frac{1}{m}\sum_{j=1}^m |f(\xi^j)|^q \le \frac{3}{2}\|f\|_q^q.
$$
\end{Theorem}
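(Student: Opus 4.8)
The plan is to produce the points $\{\xi^j\}$ by random sampling and to show that i.i.d.\ draws from $\mu$ satisfy the required two-sided bound with positive probability. By homogeneity it suffices to treat $f$ on the $L_q$-unit sphere of $X_N$, i.e.\ to prove
$$
\sup_{f\in X_N,\ \|f\|_q=1}\left|\frac1m\sum_{j=1}^m |f(\xi^j)|^q-\|f\|_q^q\right|\le \tfrac12 .
$$
So I would fix $m\asymp NB^q(\log_2(2BN))^2$, draw $\xi^1,\dots,\xi^m$ independently according to $\mu$, and study the empirical process $G_f:=\frac1m\sum_j g_f(\xi^j)-\int_\Omega g_f\,d\mu$ with $g_f:=|f|^q$, aiming at $\bE\sup_f|G_f|<1/2$; a single good realization then yields the theorem.

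The two deterministic inputs I would prepare first are: (i) a Nikol'skii-type bound $\|f\|_\infty\ll BN^{1/q}\|f\|_q$ for $f\in X_N$, which follows from the case $k=1$ of the hypothesis (\ref{D7}) together with the central symmetry of $X^q_N$ (a covering of a symmetric set by $2^1$ balls of radius $\e_1$ forces its $L_\infty$-radius to be $\ll\e_1$); and (ii) sharp control of the increments of $t\mapsto|t|^q$. For the latter the elementary inequality $\bigl||a|^q-|b|^q\bigr|\le q\max(|a|,|b|)^{q-1}|a-b|$, combined with $\|f\|_q\le1$ and (i), yields both a sup-norm estimate $\|g_f-g_{f'}\|_\infty\ll qB^{q-1}N^{(q-1)/q}\|f-f'\|_\infty$ and, crucially, the variance estimate $\int_\Omega|g_f-g_{f'}|^2\,d\mu\ll q^2B^{q-2}N^{(q-2)/q}\|f-f'\|_\infty^2$ (or, alternatively, $\ll \|f\|_\infty^{2(q-1)}\|f-f'\|_{L_2(\mu)}^2$). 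The gain of the factor $B^qN$ in the variance over the crudest bound is exactly what later separates the correct linear-in-$N$ count from a wasteful $N^2$, so I would set this up with care.

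With these in hand I would run a chaining argument against (\ref{D7}). The hypothesis says an $L_\infty$-net of $X^q_N$ at scale $\delta$ has cardinality $2^k$ with $k\approx N(B/\delta)^q$ in the range $k\le N$ (scales $\delta\ge B$); for finer scales I would instead use the volumetric estimate $\log N_\infty(\delta)\ll N\log(BN^{1/q}/\delta)$ valid for the $N$-dimensional $X_N$. Along a geometric sequence of scales I would apply Bernstein's inequality to each chaining increment, using the variance estimate in its sub-Gaussian term and the sup-norm estimate in its sub-exponential term, and union-bound over the pairs at each level. Summing the admissible deviations over scales and taking $m\asymp NB^q(\log_2(2BN))^2$ forces the Bernstein exponents to dominate the logarithms of the net cardinalities at every level, giving $\bE\sup_f|G_f|<1/2$ and hence the desired point set.

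The hard part is the balancing in this last step. The sub-Gaussian increment term is scale-\emph{invariant} precisely at the borderline exponent $q=2$, which is the source of the square on $\log(2BN)$; for $q>2$ the increments grow toward the fine scales and the dominant contribution concentrates near the saturation scale $\delta\asymp B$, where the net already has $\approx2^N$ elements, so naive one-metric bookkeeping overshoots to $N^{2-2/q}$. To recover the sharp linear power of $N$ one must measure the sub-Gaussian part in the $L_2(\mu)$ metric and the sub-exponential part in $L_\infty$ simultaneously (a two-metric, $\gamma_2+\gamma_1$ type chaining), so that the genuinely small $L_2(\mu)$-size of the increments compensates the large $\|f\|_\infty$ factor. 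Getting this interplay right, and verifying that the finest scale one must descend to keeps the total count at $NB^q(\log_2(2BN))^2$, is the crux; the reduction, the Nikol'skii bound, and the passage from ``$\bE\sup<1/2$'' to the existence of a good $\xi$ are comparatively routine.
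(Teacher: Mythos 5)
Your overall architecture --- i.i.d.\ points, reduction to $\bE\sup_f|G_f|<1/2$, chaining against (\ref{D7}) with Bernstein's inequality, and the square on the logarithm coming from $\asymp\log(2BN)$ scales each allotted a deviation budget $\asymp1/\log(2BN)$ --- is the right one, and your preparatory steps (i) and (ii) are correct. Note, however, that the paper itself contains no proof of Theorem \ref{DT1}: it is quoted from \cite{VT159} ($q=1$) and \cite{DPSTT1} ($1<q<\infty$), so the comparison below is with those cited arguments. For $q=1$ your plan essentially is the proof of \cite{VT159}: there the increments $h=|f|-|f'|$ at $L_\infty$-scale $\delta$ satisfy $\int h^2\,d\mu\le\|h\|_\infty\int|h|\,d\mu\ll\delta$ (linear, not quadratic, in $\delta$, because $\int|h|$ is bounded via $\|f\|_1\le1$), and then $k_\delta\sigma_\delta^2\asymp BN$ uniformly over scales, which is exactly the balance that yields $m\asymp BN\log^2(2BN)$.

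The gap is in the step you yourself call the crux, for $q>1$, and the resolution you propose would not close it. Every estimate you have for the class $W=\{|f|^q\}$ passes through the Lipschitz bound $\||f|^q-|f'|^q\|_\infty\le q(2BN^{1/q})^{q-1}\|f-f'\|_\infty$, so the only available entropy bound is $\e_k(W,L_\infty)\ll qB^qNk^{-1/q}$. Feeding this into the two-metric generic chaining inequality $\bE\sup\ll\gamma_2(W,L_2(\mu))/\sqrt m+\gamma_1(W,L_\infty)/m$, the sub-exponential functional is only controllable by $\gamma_1(W,L_\infty)\ll\sum_n 2^n\e_{2^n}(W,L_\infty)\asymp qB^qN^{2-1/q}$ (the levels $2^n\asymp N$ dominate, in both the hypothesis-driven and the volumetric regime), so this route cannot conclude before $m\gtrsim B^qN^{2-1/q}$, which exceeds $NB^q(\log_2(2BN))^2$ for every $q>1$; the $\gamma_2$ term computed from your variance estimate $\sigma^2\ll q^2B^{q-2}N^{(q-2)/q}\|f-f'\|_\infty^2$ gives the same order of loss, and a level-by-level Bernstein union bound with the sharper $\sigma^2\le\|h\|_\infty\int|h|$ still costs $\gtrsim B^{2q-1}N^{2-1/q}$ at the constant scale. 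The obstruction is structural: the factor $qB^{q-1}N^{1-1/q}$ (the Lipschitz constant of $t\mapsto|t|^q$ on $[-2BN^{1/q},2BN^{1/q}]$) destroys the $k_\delta\sigma_\delta^2\asymp NB^q$ balance that makes $q=1$ work. The proof in \cite{DPSTT1} avoids composing with $|\cdot|^q$ in the sup norm altogether: the increment of the empirical sums is bounded via H\"older by $q\bigl(\frac1m\sum_i|f-f'|^q(\xi^i)\bigr)^{1/q}\bigl(\frac1m\sum_i\max(|f|,|f'|)^q(\xi^i)\bigr)^{1-1/q}$, so that the large factor is the very quantity $\sup_f\frac1m\sum_i|f(\xi^i)|^q$ being estimated, and the argument closes by a bootstrap. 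That self-referential device (or some substitute for it) is the missing idea; without it your plan, as written, does not reach the stated cardinality.
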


The reader can find results on sampling discretization of integral norms of trigonometric polynomials with frequencies from a hyperbolic cross in a recent paper \cite{VT180}. For general results on the entropy numbers of the unit $L_p$-balls, $1\le p\le 2$, of finite dimensional subspaces in the uniform norm we refer the reader to \cite{DPSTT2} (see Theorem 2.1 there). Some results on the entropy numbers of the unit $L_q$-balls of $\Tr(Q_n)$ in the norm $L_p$, $1<q<p<\infty$, can be found in 
\cite{VTbookMA}, Ch.7. 

{\bf Open problem 3.} Could we improve Theorem \ref{DT1} if instead of condition (\ref{D7}) imposed on the entropy numbers $\e_k(X^q_N,L_\infty)$ we use the same condition imposed on the Kolmogorov widths $d_k(X^q_N,L_\infty)$?

{\bf Acknowledgements.}  
The first author was supported by the Russian Federation Government Grant N{\textsuperscript{\underline{o}}}14.W03.31.0031. The second author was supported by the DFG Ul-403/2-1 grant. The paper contains results obtained in frames of the program \lq\lq Center for the storage and analysis of big data", supported by the Ministry of Science and High Education of Russian Federation (contract 11.12.2018 N{\textsuperscript{\underline{o}}}13/1251/2018 between the Lomonosov Moscow State University and the Fund of support of the National technological initiative projects).


\begin{thebibliography}{9999}
 
 \bibitem{C} B. Carl, Entropy numbers, $s$-numbers, and eigenvalue problems, J. Func. Analysis, {\bf 41} (1981), 290--306.
 
 \bibitem{CM} A. Cohen and G. Migliorati, Optimal weighted least-squares methods, {\it SMAI J. Computational Mathematics} {\bf 3} (2017), 181--203.
 
  \bibitem{DPTT} F. Dai, A. Prymak, V.N. Temlyakov, and  S.U. Tikhonov, Integral norm discretization and related problems,
  {\it Russian Math. Surveys} {\bf 74:4} (2019),   579--630.
 Translation from
{\it Uspekhi Mat. Nauk}  {\bf 74:4(448)}  (2019),	3--58; arXiv:1807.01353v1.

\bibitem{DPSTT1} F. Dai, A. Prymak, A. Shadrin, V. Temlyakov, S. Tikhonov,
Sampling discretization of integral norms,
arXiv:2001.09320v1 [math.CA] 25 Jan 2020.

 \bibitem {DPSTT2} F. Dai, A. Prymak, A. Shadrin, V. Temlyakov, and S. Tikhonov, Entropy numbers and Marcinkiewicz-type discretization theorem, arXiv:2001.10636v1 [math.CA] 28 Jan 2020.
 
   \bibitem{DU} Dinh D{\~u}ng and T. Ullrich, Lower bounds for the integration error for multivariate functions with mixed smoothness and optimal Fibonacci cubature for functions on the square,
Math. Nachr., {\bf 288} (2014), 743--762. 

 \bibitem{DTU} Dinh D{\~u}ng, V.N. Temlyakov, and T. Ullrich, Hyperbolic Cross Approximation, Advanced Courses in Mathematics CRM Barcelona, Birkh{\"a}user, 2018; arXiv:1601.03978v2 [math.NA] 2 Dec 2016.
 
 \bibitem{FPRU} S. Foucart, A. Pajor, H. Rauhut, and T. Ullrich, The Gelfand widths of $\ell_p$-balls for $0<p\le 1$, J. Complexity, {\bf 26}, 629--640.
 
  \bibitem{Kos} E. Kosov, Marcinkiewicz-type discretization
of $L^p$-norms under the Nikolskii-type inequality assumption, arXiv:2005.01674v1 [math.FA] 4 May 2020.

\bibitem{KU} D. Krieg and M. Ullrich, Function values are enough for $L_2$-approximation,
arXiv:1905.02516v4 [math.NA] 19 Mar 2020. 

\bibitem{KU2} D. Krieg and M. Ullrich, Function values are enough for $L_2$-approximation: Part II,
arXiv: 2011.01779v1 [math.NA] 3 Nov 2020.

\bibitem{LT} I. Limonova and V. Temlyakov, On sampling discretization in $L_2$, arXiv:2009.10789v1 [math.FA] 22 Sep 2020. 

\bibitem{No} E. Novak, Quadrature and Widths, J. Approx. Theory, {\bf 47} (1986), 195--202.

\bibitem{NoLN} E. Novak, {\em Deterministic and Stochastic Error Bounds in
Numerical Analysis}, Springer-Verlag, Berlin, 1988.

\bibitem{NSU} N. Nagel, M. Sch{\"a}fer, T. Ullrich, A new upper bound for sampling numbers,
arXiv:2010.00327v1 [math.NA] 30 Sep 2020. 

 \bibitem{Tal} M. Talagrand, Upper and lower bounds for stochastic processes: modern methods and classical problems.
-- Springer Science and Business Media, 2014.

\bibitem{Tmon} V.N. Temlyakov, Approximation of functions with bounded mixed derivative, Trudy MIAN, {\bf 178} (1986), 1--112. English transl. in Proc. Steklov Inst. Math., {\bf 1} (1989).

 \bibitem{VT43}  V.N. Temlyakov,  On a way of obtaining lower estimates 
for the errors of quadrature formulas, Matem. Sbornik, {\bf 181} (1990),
 1403--1413;  English transl. in  Math.
USSR Sbornik, {\bf 71} 
(1992). 

\bibitem{VTbook} V.N. Temlyakov, Greedy Approximation, Cambridge University
Press, 2011

 \bibitem{VT150} V.N. Temlyakov, Constructive sparse trigonometric approximation and other problems for functions with mixed smoothness,  Matem. Sb., {\bf 206} (2015), 131--160;  arXiv: 1412.8647v1 [math.NA] 24 Dec 2014, 1--37. 
 
\bibitem{VT159} V.N. Temlyakov, The Marcinkiewicz-type discretization theorems, {\it Constructive Approximation}, {\bf 48} (2018), 337--369. 

\bibitem{VTbookMA} V.N. Temlyakov, Multivariate Approximation, Cambridge University Press, 2018.

\bibitem{VT180} V.N. Temlyakov, Sampling discretization of integral norms of the hyperbolic cross polynomials, arXiv:2005.05967v1 [math.NA] 12 May 2020.

\bibitem{VT183} V.N. Temlyakov, On optimal recovery in $L_2$, arXiv:2010.03103v1 [math.NA] 7 Oct 2020. 

\bibitem{TrBe} R.M. Trigub and E.S. Belinsky, Fourier Analysis and Approximation of Functions, Kluwer Academic Publishers, 2004. 
 
 	
 \end{thebibliography}
\end{document}